\newtheorem{thm}{Theorem}[section]
\newtheorem{lem}[thm]{Lemma}
\newtheorem{cor}[thm]{Corollary}
\newtheorem{prop}[thm]{Proposition}
\newtheorem{ex}[thm]{Example}
\newtheorem*{prob*}{Open problem}
\theoremstyle{definition}
\newtheorem{defi}[thm]{Definition}
\theoremstyle{remark}
\newtheorem{rem}[thm]{Remark}
\newtheorem*{rem*}{Remark}
\DeclareMathOperator{\id}{id}
\newcommand{\kringel}{\mathbin{\raise1pt\hbox{$\scriptstyle\circ$}}}
\newcommand{\pkt}{\mathbin{\raise0pt\hbox{$\scriptstyle\bullet$}}}
\newcommand{\map}[3]{ #1 : #2 \longrightarrow #3 }
\newcommand{\C}{\mathbb{C}}
\newcommand{\F}{\mathbb{F}}
\newcommand{\N}{\mathbb{N}}
\newcommand{\Q}{\mathbb{Q}}
\newcommand{\Z}{\mathbb{Z}}
\newcommand{\per}{{\rm per}}
\newcommand{\Der}{{\rm Der}}
\newcommand{\diag}{\mathop{\rm diag}}
\newcommand{\Lg}{\mathfrak{g}}
\newcommand{\Lh}{\mathfrak{h}}
\newcommand{\CB}{\mathcal{B}}
\newcommand{\CN}{\mathcal{N}}
\newcommand{\CP}{\mathcal{P}}
\newcommand{\abs}[1]{\lvert#1\rvert}
\newcommand{\al}{\alpha}
\newcommand{\be}{\beta}
\newcommand{\ga}{\gamma}
\newcommand{\de}{\delta}
\newcommand{\la}{\lambda}
\newcommand{\om}{\omega}
\newcommand{\si}{\sigma}
\newcommand{\ov}{\overline}
\newcommand{\ra}{\rightarrow}
\renewcommand{\phi}{\varphi}
\begin{document}


\title[Polynomial derivation identity]{The structure of Lie algebras with a derivation satisfying a polynomial identity}

\author[D. Burde]{Dietrich Burde}
\author[W. Moens]{Wolfgang Alexander Moens}
\address{Fakult\"at f\"ur Mathematik\\
Universit\"at Wien\\
  Oskar-Morgenstern-Platz 1\\
  1090 Wien \\
  Austria}
\email{dietrich.burde@univie.ac.at}
\address{Fakult\"at f\"ur Mathematik\\
Universit\"at Wien\\
  Oskar-Morgenstern-Platz 1\\
  1090 Wien \\
  Austria}
\email{wolfgang.moens@univie.ac.at}

\date{\today}

\subjclass[2000]{Primary 17B40, 17B50}
\keywords{Nonsingular Derivation, polynomial identity}

\begin{abstract}
We prove nilpotency results for Lie algebras over an arbitrary field admitting a derivation, which satisfies a given
polynomial identity $r(t)=0$. In the special case of the polynomial $r=t^n-1$ we obtain
a uniform bound on the nilpotency class of Lie algebras admitting a periodic derivation of order $n$.
We even find an optimal bound on the nilpotency class in characteristic $p$ if $p$ does not divide
a certain invariant $\rho_n$. 
We are able to generalize the results to Lie rings over $\Z$.
 \end{abstract}

\maketitle

\section{Introduction}

Let $\Lg$ be a finite-dimensional Lie algebra over an arbitrary field $K$. 
A derivation $D$ of $\Lg$ is said to be {\em nonsingular} if it is bijective as a linear transformation.
It is called {\em periodic of order $n$}, if $D^n=\id$ and $D^k\neq \id$ for all $0<k<n$.
The interest in nonsingular or periodic derivations comes in part from the coclass theory for groups and Lie algebras,
with the proof of the coclass conjectures by Shalev. 
In many cases the existence of a nonsingular or periodic derivation has a strong impact on the structure of the
Lie algebra. For example, by a result of Jacobson \cite{JAC},
a Lie algebra over a field of characteristic zero admitting a nonsingular derivation is {\em nilpotent}. 
Furthermore, by a result of Kostrikin and Kuznetsov \cite{KUK}, a Lie algebra over a field of characteristic zero
admitting a periodic derivation of order $n$ such that $6$ does not divide $n$ is {\em abelian}.
In prime characteristic $p>0$ however, the situation is more complicated. There exist even simple modular 
Lie algebras admitting a periodic derivation. Shalev asked in his Problem $1$ in \cite{SHA}, which positive integers
$n$ arise as the order of a periodic derivation for a non-nilpotent Lie algebra in characteristic $p$.
The set of such integers was denoted $\CN_p$ by Mattarei, who showed in \cite{MA3} that $\CN_p$ coincides with
the set of all positive integers $n$ such that there exists an element $\al\in \ov{\F}_p$ with
$(\al+\la)^n=1$ for all $\la\in \F_p$. In  \cite{MA1,MA2,MA3}  Mattarei has made a detailed study of the set
$\CN_p$, using interesting number theoretical methods. We refer to the introduction of \cite{MA2} for a nice
overview. \\[0.2cm]
In this article we study not only Lie algebras admitting periodic derivations, but more generally Lie algebras over a
field $K$ admitting a derivation $D$, which satisfies an {\em arbitrary} polynomial identity $r(D)=0$ given by a polynomial
$r\in K[t]$. We obtain a general result on the nilpotency
by applying recent work of \cite{MON,MOE} as follows, see Theorem $\ref{3.10}$ and Remark $\ref{3.11}$. \\[0.3cm]
{\bf Theorem} {\em
Let $K$ be a field of characteristic $p\ge 0$, $r\in K[t]$ be a polynomial of degree $n\ge 0$ and
$X=\{\al \in \ov{K} \mid r(a)=0 \}$ be the set of roots in $\ov{K}$.
If $X$ is an arithmetically-free subset of $(\ov{K},+)$, then every Lie algebra $\Lg$ over $K$ admitting
a derivation $D$, which satisfies $r(D)=0$, is nilpotent of class $c(\Lg)\le H(n)$.
If $X$ is not an arithmetically-free subset of $(\ov{K},+)$, then there exists
some non-nilpotent Lie algebra over $\ov{K}$ of dimension $n+1$ admitting a derivation $D$, which satisfies $r(D)=0$.}\\[0.3cm]
Here $H(n)$ is the generalized Higman map, see section $3$. In characteristic zero the root set $X$ is arithmetically-free
if and only if $r(0)\neq 0$. In characteristic $p>0$ this is much harder to describe. But if we specify to the
case $r=t^n-1\in \F_p[t]$, i.e., to periodic derivations of order dividing $n$, we can show that
$X=X_{n,p}=\{\al\in \ov{\F}_p \mid \al^n=1 \}$ is not an arithmetically-free subset of $(\ov{\F}_p,+)$
if and only if $n\in \CB_p$, where
\[
\CB_p =\N\cdot \{\per(h (t^p-t))\mid h\in \F_p[t] \text{ with } h(0)\neq 0, \deg(h)\ge 1\}.
\]
Here $\per(r)$ denotes the {\em period} of $r\in \F_p[t]$, which is the minimal positive integer $m$ such that
 $r$ divides $t^m-1$ in $\F_p[t]$, if such an $m$ exists.  
In fact, we prove the following result, see Proposition $\ref{3.18}$. \\[0.3cm]
{\bf Proposition} {\em
Let $n\in \N$ and $p$ be a prime number. If $n\not\in \CB_p$, then every Lie algebra $\Lg$ over
a field of characteristic $p>0$ admitting a periodic derivation of order $n$ is nilpotent of class
$c(\Lg)\le H(n)$. If  $n\in \CB_p$ then there exists some non-nilpotent Lie algebra in characteristic
$p>0$ admitting a periodic derivation of order $n$.}\\[0.3cm]  
In paricular we see that the set $\CN_p$ considered by Shalev and Mattarei coincides with our set $\CB_p$.
Any $h\in \F_p[t]$ with $h(0)\neq 0$ and $\deg(h)\ge 1$ will produce an element of $\CB_p=\CN_p$ by computing the
period of $h(t^p-t)$. For example, looking at irreducible polynomials $h\in \F_2[t]$ of low degree we see that
$3,7,31,73,85,127$ are the first few primitive elements of $\CN_2$, see Example $\ref{3.16}$.
However, most importantly we obtain a uniform upper bound $H(n)$ for the nilpotency class
in case that $n\not\in \CB_p$. In addition, we can improve this bound significantly by excluding finitely many prime
characteristics $p>0$. In Theorem $\ref{3.6}$ we show the following. \\[0.3cm]
{\bf Theorem } {\em
Let $\Lg$ be a Lie algebra  over a field $K$ of arbitrary characteristic $p\ge 0$. Suppose that $\Lg$ admits a periodic
derivation $D$ of order $n$ such that $p$ does not divide $\rho_n$. Then $\Lg$ is nilpotent of class $c(\Lg)\le 1$ if
$n$ is not divisible by $6$ and of class $c(\Lg)\le 2$ if $6\mid n$.} \\[0.3cm]
The invariant $\rho_n$ is defined by the resultant of the polynomials $t^n-1$ and $(t+1)^n-1$ if $n$ is not divisible by $6$,
and by the resultant of $\frac{t^n-1}{\Phi_3}$ and $\frac{(t+1)^n-1}{\Phi_3}$ if $6\mid n$, where $\Phi_3$ is the third
cyclotomic polynomial in $\Z[t]$. It is known that $\rho_n$ in the first case is given by the Wendt determinant of a
circulant matrix with first row the binomial coefficients. This was first studied by Wendt in \cite{WEN} in connection with
Fermat's last theorem. For our result, the prime divisors of $\rho_n$ are of interest. We show some properties of $\rho_n$
in section $2$. In case of $p\mid \rho_n$ the conclusion of Theorem  $\ref{3.6}$ may or may not hold. Moreover for many
primes $p$ dividing $\rho_n$ the set $X_{n,p}$ is in fact still arithmetically-free, so that $\Lg$ is nilpotent. \\[0.2cm]
Finally, we can generalize our result to derivations of Lie rings over $\Z$ satisfying an arbitrary polynomial identity.
Here we need new integer valued invariants $\delta(r)$ and $\sigma(r)$ for $r\in \Z[t]$, which we study in section $2$.
For the specific example of $r=t^n-1$ these invariants are given by the discriminant of $r$ for $\delta(r)$, and by
$\sigma(r)=(-\rho_n)^n$ if $n$ is not divisible by $6$, and by $\sigma(r)=\left( \frac{n^2\rho_n}{3}\right)^n$ if $6\mid n$.

\section{Arithmetic Invariants}

Let $S$ be a commutative ring and $f,g\in S[t]$ be two polynomials. Denote by $R(f,g)$ the {\em resultant} of $f$ and $g$
over $S$, given by the determinant of the Sylvester matrix with columns given by the coefficients of $f$ and $g$.
Recall that the resultant of two polynomials with coefficients in an integral domain is zero if and only if they have a common
divisor of positive degree. 

\begin{lem}
Let $n\ge 1$. The greatest common divisor of  $f=t^n-1, g=(t+1)^n-1 \in \Z[t]$ is given by
\[
\gcd(f,g)=\begin{cases} 1 & \text{ if } n\not\equiv 0 \bmod 6,\\
 t^2+t+1 & \text{ if } n\equiv 0\bmod 6.
 \end{cases}
\]
Here $\Phi_3=t^2+t+1\in \Z[t]$ is the third cyclotomic polynomial.
\end{lem}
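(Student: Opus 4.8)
The plan is to pass to roots over $\C$ and to identify the common roots of $f$ and $g$. First I would reduce the $\Z[t]$-gcd to a computation over $\C$: since $f=t^n-1$ and $g=(t+1)^n-1$ are both monic with integer coefficients, their monic greatest common divisor $d$ in $\Q[t]$ already lies in $\Z[t]$ (a monic factor of a monic integer polynomial whose complementary factor is also monic has integer coefficients, by Gauss's lemma), and this $d$ is the asserted $\gcd$ in $\Z[t]$ up to the unit $\pm 1$. Moreover $t^n-1$ is separable over $\C$, so $d$ is squarefree and equals the product of the linear factors $(t-\al)$ over the common roots $\al$ of $f$ and $g$ in $\C$. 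Thus everything reduces to determining these common roots.

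Next I would pin down the common roots. A common root $\al$ satisfies $\al^n=1$ and $(\al+1)^n=1$, so both $\al$ and $\al+1$ are roots of unity, whence $\ov{\al}=\al^{-1}$ and $\ov{\al+1}=(\al+1)^{-1}$. Writing $\ov{\al+1}=\ov{\al}+1=\al^{-1}+1$ and equating with $(\al+1)^{-1}$ gives, after clearing denominators, $(\al+1)^2=\al$, that is, the relation $\al^2+\al+1=0$. Hence every common root is a primitive cube root of unity, i.e. a root of $\Phi_3=t^2+t+1$. Geometrically this is just the statement that the unit circles centred at $0$ and at $-1$ meet only in the two points $e^{\pm 2\pi i/3}$.

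It remains to decide when the candidate roots $\om,\om^2$ actually occur as roots of both $f$ and $g$, and this is where the divisibility condition on $n$ enters. The element $\om$ is a root of $t^n-1$ exactly when $3\mid n$. To test $g$, I compute $\om+1=-\om^2$, so $(\om+1)^n=(-1)^n\om^{2n}$; a short check of the eight possibilities for the pair $\bigl((-1)^n,\om^{2n}\bigr)$ shows this equals $1$ precisely when $6\mid n$. Since $6\mid n$ forces $3\mid n$, it follows that $\om$ (and by symmetry $\om^2$) is a common root if and only if $6\mid n$. Therefore $d=\Phi_3$ when $6\mid n$ and $d=1$ otherwise, as claimed. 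The only genuine content is the root identification in the second paragraph; the reduction in the first paragraph and the order computation in the third are routine, and I expect no serious obstacle beyond bookkeeping with Gauss's lemma and with orders of roots of unity.
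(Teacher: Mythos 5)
Your proposal is correct, and its skeleton is the same as the paper's: reduce to the common roots of $f$ and $g$ in characteristic zero, note that the gcd is squarefree (you from separability of $t^n-1$, the paper from the nonvanishing of its discriminant), show that any common root must be a primitive third root of unity, and then check that $\omega$, $\omega^2$ really are common roots precisely when $6\mid n$. The one substantive difference is how the key root-identification is established: the paper does not prove it at all but quotes Lemma~2.2 of \cite{BU45} (with $\alpha=1$, $\beta=\gamma$), whereas you derive it from scratch via the conjugation argument --- $\overline{\alpha}=\alpha^{-1}$ and $\overline{\alpha+1}=(\alpha+1)^{-1}$ force $(\alpha+1)^2=\alpha$, i.e.\ $\Phi_3(\alpha)=0$; geometrically, the unit circles centred at $0$ and $-1$ meet only at $e^{\pm 2\pi i/3}$. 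So your proof is self-contained and elementary where the paper's is short by citation, and you also make explicit the passage from the $\Z[t]$-gcd to the root description over $\C$ (Gauss's lemma), a point the paper passes over in silence. Two harmless slips that do not affect correctness: your finite check runs over six, not eight, pairs $\bigl((-1)^n,\omega^{2n}\bigr)$, and when clearing denominators you should remark that $\alpha\neq 0$ and $\alpha+1\neq 0$, which is automatic because both are roots of unity.
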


\begin{proof}
Let $h=\gcd (f,g)$. Since the discriminant of $t^n-1$ is nonzero in characteristic zero, every irreducible factor of $h$ has
multiplicity one. For the first case let $n\not\equiv 0 \bmod 6$ and assume that $h\neq 1$. Then $f$ and $g$ have a
common root $\ga$, so that $\ga^n=(\ga+1)^n=1$. By Lemma $2.2$ of \cite{BU45} with $\alpha=1$ and $\be=\ga$
it follows that $\ga=\om$ is a primitive third root of unity, and hence $\ga+1$ is a primitive sixth root of unity.
Because of $(\ga+1)^n=1$ we obtain $6\mid n$, which is a contradiction. Hence $h=1$ in this case.\\
In the second case we assume that $6\mid n$. Let $\om$ be a primitive third root of unity. Then $\om$ is a root of $t^n-1$
and of $(t+1)^n-1$, since $1+\om$ is a primitive sixth root of unity. So we have $\Phi_3\mid h$. By Lemma $2.2$ of \cite{BU45}
every root of $h$ is also a root of $\Phi_3$. Since $h$ has only simple roots it follows that $h=\Phi_3$.
\end{proof}

The following definition yields a nonzero integer, because the  polynomials are coprime in each case by the above lemma.

\begin{defi}
For $n\ge 1$ define a nonzero integer $\rho_n$ by
\[
\rho_n=  \begin{cases} R(t^n-1,(t+1)^n-1) &  \text{ if } n\not\equiv 0 \bmod 6,\\[0.2cm]
R\left(\frac{t^n-1}{\Phi_3}, \frac{(t+1)^n-1}{\Phi_3}\right) &  \text{ if } n\equiv 0\bmod 6.
 \end{cases}
\]  
\end{defi}  

The resultant of $t^n-1$ and $(t+1)^n-1$ has been studied for a long time already. It arises among other things
in number theory.

\begin{prop}
For $n\not\equiv 0\bmod 6$ the invariant $\rho_n$ is given by the Wendt determinant $\det (C(n))$, where $C(n)\in M_n(\Z)$ is
the following circulant matrix: 
\[
  C(n)= \begin{pmatrix} 1 & \binom{n}{1} & \binom{n}{2} & \cdots &  \binom{n}{n-1} \\[0.2cm]
    \binom{n}{n-1} & 1 & \binom{n}{1} & \cdots & \binom{n}{n-2} \\[0.2cm]
    \cdots &  \cdots &  \cdots &  \cdots &  \cdots \\[0.2cm]
    \binom{n}{1} & \binom{n}{2} & \binom{n}{3} & \cdots  & 1
    \end{pmatrix}
 \]
We have $\det(C(n))=0$ if and only if $n\equiv 0\bmod 6$. 
\end{prop}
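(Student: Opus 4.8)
The plan is to recognise that both $\rho_n$ (for $n\not\equiv 0\bmod 6$) and $\det(C(n))$ are the \emph{same} product taken over the $n$-th roots of unity, and then to read off the vanishing statement directly from the gcd lemma above.

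First I would apply the eigenvalue formula for circulant matrices. Fixing a primitive $n$-th root of unity $\om\in\C$, a circulant matrix with first row $(c_0,\dots,c_{n-1})$ has eigenvalues $\la_j=\sum_{k=0}^{n-1}c_k\om^{jk}$ for $j=0,\dots,n-1$, and its determinant is $\prod_{j=0}^{n-1}\la_j$. For $C(n)$ we have $c_k=\binom{n}{k}$, so $\la_j=\sum_{k=0}^{n-1}\binom{n}{k}\om^{jk}$. The one computation that matters is to relate this to a binomial expansion: since $(1+\om^j)^n=\sum_{k=0}^{n}\binom{n}{k}\om^{jk}$ and the omitted $k=n$ term equals $\om^{jn}=1$, we obtain $\la_j=(1+\om^j)^n-1$, whence
\[
\det(C(n))=\prod_{j=0}^{n-1}\bigl((1+\om^j)^n-1\bigr).
\]

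Next I would evaluate the resultant by the product-over-roots formula. Since $t^n-1=\prod_{j=0}^{n-1}(t-\om^j)$ is monic with the $n$-th roots of unity as its roots, one gets $R(t^n-1,(t+1)^n-1)=\prod_{j=0}^{n-1}\bigl((\om^j+1)^n-1\bigr)$, which is the very same product. Hence $\det(C(n))=R(t^n-1,(t+1)^n-1)$ for every $n\ge 1$, and for $n\not\equiv 0\bmod 6$ this equals $\rho_n$ by definition, giving the first assertion. For the vanishing criterion, $\det(C(n))=R(t^n-1,(t+1)^n-1)$ is zero precisely when $t^n-1$ and $(t+1)^n-1$ have a common factor of positive degree; by the preceding Lemma this happens exactly for $n\equiv 0\bmod 6$.

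The main obstacle is really only bookkeeping rather than a conceptual difficulty. One must pin down the orientation of the circulant so that the entries are indeed $c_k=\binom{n}{k}$ (and not a transposed or reversed variant), and make sure the exponents $\om^{jk}$ in the eigenvalues line up with the roots $\om^j$ used in the resultant. I would double-check both conventions, since a mismatch there would at worst permute the factors — harmless for the product, but it is the step where an unnoticed error could creep in.
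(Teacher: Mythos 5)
Your proof is correct, but it takes a genuinely different route from the paper. The paper disposes of this proposition entirely by citation: the identity $\det(C(n))=R(t^n-1,(t+1)^n-1)$ is attributed to Wendt's original 1894 paper, and the vanishing criterion $\det(C(n))=0\Leftrightarrow 6\mid n$ to Lehmer's 1935 result. You instead reprove both facts from scratch: the circulant eigenvalue formula gives $\det(C(n))=\prod_{j=0}^{n-1}\bigl((1+\om^j)^n-1\bigr)$, the product-over-roots formula for the resultant (legitimate here since $t^n-1$ is monic, so $R(f,g)=\prod_{f(\al)=0}g(\al)$) identifies this product with $R(t^n-1,(t+1)^n-1)$, and the vanishing statement then follows from the gcd lemma that immediately precedes the proposition in the paper, rather than from Lehmer's theorem. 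Your bookkeeping checks out as well: the matrix $C(n)$ as displayed is the standard circulant with $c_k=\binom{n}{k}$ (each row is the right cyclic shift of the one above), so the eigenvalue formula applies verbatim, and $\la_j=(1+\om^j)^n-\om^{jn}=(1+\om^j)^n-1$ is exactly right. What your approach buys is self-containedness and the observation that the identity $\det(C(n))=R(t^n-1,(t+1)^n-1)$ holds for \emph{all} $n\ge 1$, not just $n\not\equiv 0\bmod 6$, with the restriction entering only when one wants to call this quantity $\rho_n$; what the paper's approach buys is brevity and correct historical attribution.
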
  

\begin{proof}
The circulant matrix $C(n)$ with with first row the binomial coefficients was introduced by E. Wendt in \cite{WEN}
in connection with Fermat's last theorem. Wendt showed that its determinant equals the resultant of the
polynomials $t^n-1$ and $(t+1)^n-1$. E. Lehmer proved in \cite{LEH} that $\det(C(n))=0$ if and only if $n\equiv 0\bmod 6$.  
\end{proof}

For $n\not \equiv 0\bmod 6$ the above $\rho_n$ coincides with the invariant $\Delta_n$ introduced by
Kostrikin and Kuznetsov in \cite{KUK}. The Wendt determinant is listed at OEIS as sequence $A048954$. The first ten numbers
and their prime factorization are given as follows.

\vspace*{0.5cm}
\begin{center}
\begin{tabular}{c|c|c}
$n$ & $\rho_n$ & prime factors \\
\hline
$1$ & $1$ & $1$ \\
$2$ & $-3$ & $-3$ \\
$3$ & $28$ & $2^{2}\cdot 7$ \\
$4$ & $-375$ & $-3\cdot 5^{3}$  \\
$5$ & $3751$ & $11^2\cdot 31$ \\
$7$ & $6835648$ & $2^{6}\cdot 29^{2}\cdot 127$ \\
$8$ & $ -1343091375$ & $-3^7\cdot 5^3\cdot 17^3$ \\
$9$ & $364668913756$ & $2^2\cdot 7\cdot 19^4\cdot 37^2\cdot 73$ \\
$10$ & $-210736858987743$ & $-3\cdot 11^9\cdot 31^3$ \\
$11$ & $101832157445630503$ & $23^5\cdot 67^2\cdot 89\cdot 199^2$ 
\end{tabular}
\end{center}
\vspace*{0.5cm}

For the case $n\equiv 0\bmod 6$ there is no sequence available. We list a few numbers with their
prime decomposition.

\vspace*{0.5cm}
\begin{center}
\begin{tabular}{c|c}
$n$ & $\rho_n$ \\
\hline
$6$ & $-2^2\cdot 3\cdot 7^3$ \\
$12$ & $-2^{10}\cdot 3\cdot 5^3\cdot 7^3\cdot 13^9$ \\
$18$ & $-2^{2}\cdot 3^{12}\cdot 7^3\cdot 19^{15}\cdot 37^6\cdot 73^3$ \\
$24$ & $-2^{30}\cdot 3^{31}\cdot 5^{21}\cdot 7^9\cdot 13^9\cdot 17^3\cdot 73^6\cdot 241^3$  \\
$30$ & $-2^{50}\cdot 3^{1}\cdot 5^{8}\cdot 7^3\cdot 11^9\cdot 31^{27}\cdot 61^{12}\cdot 151^3\cdot 271^6\cdot 331^3$ \\
$36$ & $-2^{10}\cdot 3^{12}\cdot 5^{3}\cdot 7^3\cdot 13^9\cdot 17^6\cdot 19^{15}\cdot 37^{33}\cdot 73^{15}\cdot 109^9
       \cdot 181^6\cdot 757^6$ \\
\end{tabular}
\end{center}
\vspace*{0.5cm}
The integers $\rho_n$ satisfy the following divisibility property.

\begin{lem}\label{2.4}
Let $m,n\ge 1$ with $m\mid n$. Then $\rho_m\mid \rho_n$ in $\Z$.
\end{lem}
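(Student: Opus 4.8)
The plan is to reduce the statement to the multiplicativity of the resultant combined with the factorization into cyclotomic polynomials. First I would write $t^n-1=\prod_{d\mid n}\Phi_d(t)$ and $(t+1)^n-1=\prod_{e\mid n}\Phi_e(t+1)$ and set $c_{d,e}:=R(\Phi_d(t),\Phi_e(t+1))$, which lies in $\Z$ as a resultant of integral polynomials. Since $R(\,\cdot\,,\,\cdot\,)$ is multiplicative in each argument, $R\big(\prod_d\Phi_d,\ \prod_e\Phi_e(t+1)\big)=\prod_{d,e}c_{d,e}$, so for $6\nmid n$ one gets $\rho_n=\prod_{d\mid n,\ e\mid n}c_{d,e}$ exactly.

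Next I would record the two elementary facts that pin down which indices occur. The first is the identity $\Phi_6(t+1)=\Phi_3(t)=t^2+t+1$: it shows that for $6\mid n$, dividing $\Phi_3$ out of $t^n-1$ deletes the index $d=3$, while dividing $\Phi_3(t)=\Phi_6(t+1)$ out of $(t+1)^n-1$ deletes the index $e=6$, whence $\rho_n=\prod_{d\mid n,\ d\ne 3}\prod_{e\mid n,\ e\ne 6}c_{d,e}$. The second is that $c_{d,e}=0$ exactly when $\Phi_d(t)$ and $\Phi_e(t+1)$ share a root, i.e. when some $\zeta$ and $\zeta+1$ are simultaneously roots of unity; by the same argument used for the $\gcd$ computation above this forces $\zeta=\om^{\pm 1}$ and hence $(d,e)=(3,6)$. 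Writing $S_n$ for the resulting set of admissible index pairs, I then have $\rho_n=\prod_{(d,e)\in S_n}c_{d,e}$ with all factors nonzero integers, so after cancelling the common ones the assertion $\rho_m\mid\rho_n$ is equivalent to $\prod_{(d,e)\in S_m\setminus S_n}c_{d,e}\mid\prod_{(d,e)\in S_n\setminus S_m}c_{d,e}$ in $\Z$.

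The remainder is a short comparison of $S_m$ and $S_n$ for $m\mid n$. If $3\nmid m$, or $6\mid m$, or $6\nmid n$, then one checks directly that $S_m\subseteq S_n$, so $S_m\setminus S_n=\varnothing$ and $\rho_n/\rho_m=\prod_{(d,e)\in S_n\setminus S_m}c_{d,e}\in\Z$. The one remaining configuration, which I expect to be the main obstacle, is $m$ an odd multiple of $3$ (so $3\mid m$ but $6\nmid m$) together with $6\mid n$: here $\rho_m$ is unmodified and still carries the forbidden row $d=3$, while $\rho_n$ does not, giving $S_m\setminus S_n=\{(3,e):e\mid m\}$.

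To dispose of this case I would evaluate both sides explicitly. The missing factor is $\prod_{e\mid m}c_{3,e}=R\big(\Phi_3,(t+1)^m-1\big)=((\om+1)^m-1)((\om^2+1)^m-1)=(-2)(-2)=4$, using $\om+1=-\om^2$ together with $3\mid m$ and $m$ odd. On the other hand, since $6\mid n$ the index $(6,3)$ lies in $S_n\setminus S_m$, and a direct computation gives $c_{6,3}=R(\Phi_6(t),\Phi_3(t+1))=28=2^2\cdot 7$. Because $4\mid 28$ and $c_{6,3}$ is one of the factors of $\prod_{(d,e)\in S_n\setminus S_m}c_{d,e}$, the denominator $4$ is absorbed and $\rho_n/\rho_m$ is again an integer. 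The only genuinely arithmetic input is thus the pair of evaluations $R(\Phi_3,(t+1)^m-1)=4$ and $c_{6,3}=28$; everything else is bookkeeping of divisor pairs.
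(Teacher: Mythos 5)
Your proof is correct, and it takes a genuinely different route from the paper's. The paper does not factor into cyclotomic polynomials at all: it splits into three cases and deduces divisibility of resultants directly from divisibility of polynomials ($t^m-1\mid t^n-1$ and $(t+1)^m-1\mid(t+1)^n-1$ when $6\nmid n$; the analogous statements for the quotients by $\Phi_3$ when $6\mid m$ and $6\mid n$; and, when $6\mid n$ but $6\nmid m$, the claim that $\Phi_3$ divides neither $t^m-1$ nor $(t+1)^m-1$, so that $t^m-1\mid\frac{t^n-1}{\Phi_3}$ and $(t+1)^m-1\mid\frac{(t+1)^n-1}{\Phi_3}$). The two proofs diverge precisely at your ``main obstacle'' case $3\mid m$, $6\nmid m$, $6\mid n$: there the paper's assertion ``$\Phi_3\nmid t^m-1$'' is false, since $\Phi_3\mid t^m-1$ whenever $3\mid m$ (concretely, for $m=3$, $n=6$ one has $t^3-1=(t-1)\Phi_3$, which does not divide $\frac{t^6-1}{\Phi_3}=(t-1)(t^3+1)$), so in this sub-case the paper's own argument has a gap, even though the lemma remains true. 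Your cyclotomic bookkeeping with $c_{d,e}=R\bigl(\Phi_d(t),\Phi_e(t+1)\bigr)$ handles exactly this configuration: the factor of $\rho_m$ with no counterpart in $\rho_n$ is $\prod_{e\mid m}c_{3,e}=R\bigl(\Phi_3,(t+1)^m-1\bigr)=(-2)(-2)=4$, and it is absorbed by the factor $c_{6,3}=28$ of $\rho_n$, which lies in $S_n\setminus S_m$; both evaluations check out, and sign conventions for resultants are irrelevant for divisibility in $\Z$. In short, the paper's approach buys brevity in the cases where the polynomial divisibility genuinely holds, while yours, at the cost of the index-pair bookkeeping and two explicit resultant computations, covers all cases --- including the one the paper overlooks.
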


\begin{proof}
The proof is a case-by-case verification. Suppose first that $6$ does not divide $n$. Then $6$ also does not divide
$m$. We have $(t^m-1)\mid (t^n-1)$ in $\Z[t]$ because of $m\mid n$ in $\Z$. Then we obtain $((t+1)^m-1)\mid ((t+1)^n-1)$, 
and by the multiplicative property of resultants we have
\[
\rho_m=R(t^m-1,(t+1)^m-1) \mid R(t^n-1,(t+1)^n-1)=\rho_n.
\]
Now suppose that $6\mid n$ and $6\mid m$. Then
$
\frac{t^m-1}{\Phi_3} \mid \frac{t^n-1}{\Phi_3}$ and $\frac{(t+1)^m-1}{\Phi_3} \mid \frac{(t+1)^n-1}{\Phi_3}$
in $\Z[t]$, so that again $\rho_m\mid \rho_n$. For the last case assume that  $6\mid n$ but $6$ does not divide $m$.
Then $\Phi_3$ does not divide $t^m-1$ and not $(t+1)^m-1$, but it does divide $t^n-1$ and $(t+1)^n-1$. So $t^m-1$ divides
$\frac{t^n-1}{\Phi_3}$ and $(t+1)^m-1$ divides $\frac{(t+1)^n-1}{\Phi_3}$, so that $\rho_m\mid \rho_n$.
\end{proof}

We also need the following lemma.

\begin{lem}\label{2.5}
Let $K$ be a field of arbitrary characteristic $p\ge 0$. Suppose that $\al\in K$ is a common root of $t^n-1$ and
$(t+1)^n-1$. If $6$ does not divide $n$ then $p\mid \rho_n$. If $6\mid n$ then $p\mid \rho_n$ or $\Phi_3(\al)=0$.
\end{lem}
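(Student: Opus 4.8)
The plan is to treat both cases through one observation about resultants: in each case $\rho_n=R(F,G)$ for a pair of \emph{monic} polynomials $F,G\in\Z[t]$, and the hypothesis provides a common root $\al$ of $F$ and $G$ in the field $K$. I would rely on two standard facts. First, the resultant of monic polynomials commutes with reduction modulo a prime $p$: since the leading coefficients stay equal to $1$, the Sylvester matrix keeps its size and reduces entrywise, and the determinant is a polynomial in the entries, so $R(F,G)\bmod p=R(\ov F,\ov G)$. Second, over a field the resultant of two polynomials (with invertible leading coefficients) vanishes exactly when they share a common factor of positive degree.

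First I would choose the right polynomials. If $6\nmid n$, put $F=t^n-1$ and $G=(t+1)^n-1$, so that $\rho_n=R(F,G)$ and $\al$ is a common root of $F,G$ by assumption. If $6\mid n$, set $F=\frac{t^n-1}{\Phi_3}$ and $G=\frac{(t+1)^n-1}{\Phi_3}$; the computation of $\gcd(t^n-1,(t+1)^n-1)$ above shows $\Phi_3\mid t^n-1$ and $\Phi_3\mid(t+1)^n-1$ in $\Z[t]$, so $F$ and $G$ are monic and lie in $\Z[t]$, and $\rho_n=R(F,G)$ by definition. I may assume $\Phi_3(\al)\neq 0$, since otherwise the conclusion already holds. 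Evaluating the identities $t^n-1=\Phi_3\,F$ and $(t+1)^n-1=\Phi_3\,G$ at $t=\al$, the left-hand sides vanish because $\al^n=(\al+1)^n=1$; as $K$ is a field and $\Phi_3(\al)\neq 0$, it follows that $F(\al)=G(\al)=0$. Hence in every remaining situation $\al\in K$ is a common root of the monic pair $F,G$.

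It then remains to pass from a common root in characteristic $p$ to the divisibility $p\mid\rho_n$. For $p>0$ I would reduce $F,G$ modulo $p$ to monic $\ov F,\ov G\in\F_p[t]$ and invoke the first fact to get $\rho_n\equiv R(\ov F,\ov G)\pmod p$. The minimal polynomial of $\al$ over $\F_p$ divides both $\ov F$ and $\ov G$ and has positive degree, so by the second fact $R(\ov F,\ov G)=0$, whence $p\mid\rho_n$. The case $p=0$ is immediate from the same gcd computation: when $6\nmid n$ the polynomials $t^n-1$ and $(t+1)^n-1$ are coprime over $\Q$ and admit no common root at all, while when $6\mid n$ every common root is a root of their gcd $\Phi_3$, forcing $\Phi_3(\al)=0$. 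The only point needing genuine care is the case $6\mid n$: one must check that division by $\Phi_3$ preserves integrality and monicity of $F,G$ (so that $\rho_n=R(F,G)$ and the reduction step truly applies) and must correctly dispose of the subcase $\Phi_3(\al)=0$; everything else is routine resultant bookkeeping.
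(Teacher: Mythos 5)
Your proof is correct, but it runs along a genuinely different track than the paper's. The paper uses the Bézout-type property of resultants over $\Z$: since $\rho_n=R(F,G)$ lies in the ideal generated by $F$ and $G$ in $\Z[t]$, one can write $\rho_n=u\cdot(t^n-1)+v\cdot((t+1)^n-1)$ when $6\nmid n$, and $\rho_n\cdot\Phi_3=u\cdot(t^n-1)+v\cdot((t+1)^n-1)$ when $6\mid n$ (the latter obtained by multiplying the identity for the quotient polynomials by $\Phi_3$); evaluating at $\al$ kills the right-hand side, so $\rho_n=0$ in $K$ (resp. $\rho_n\cdot\Phi_3(\al)=0$ in $K$), which gives $p\mid\rho_n$ (resp. the stated disjunction) in a single stroke, uniformly in all characteristics including $p=0$. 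You instead specialize the resultant: monicity ensures the Sylvester matrix reduces entrywise mod $p$, the minimal polynomial of $\al$ over $\F_p$ is a common factor of positive degree of $\ov F,\ov G$, hence $R(\ov F,\ov G)=0$ and $p\mid\rho_n$; and you handle $p=0$ by a separate coprimality argument over $\Q$, and the case $6\mid n$ by first assuming $\Phi_3(\al)\neq 0$ and cancelling $\Phi_3(\al)$ to show $\al$ is a common root of the quotient polynomials. Both routes rest on standard resultant facts; the paper's identity-based argument is shorter and avoids the case split on $p$ and the base-change lemma, while your version makes explicit the good behaviour of resultants of monic polynomials under reduction and explains the divisibility as the literal vanishing of a resultant over $\F_p$. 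Your attention to the integrality and monicity of $(t^n-1)/\Phi_3$ and $((t+1)^n-1)/\Phi_3$ is exactly the point that makes the reduction step legitimate, so no gap remains.
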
  

\begin{proof}
Assume that $6$ does not divide $n$. There exist $u,v\in \Z[t]$ such that
\[
\rho_n=u\cdot (t^n-1)+v\cdot ((t+1)^n-1).
\]
By evaluating in $\al$ we obtain $\rho_n\al=0$ in $K$ and hence $p\mid \rho_n$. Suppose now that $6\mid n$.
Then there exist  $u,v\in \Z[t]$ such that $\rho_n\cdot \Phi_3=u\cdot (t^n-1)+v\cdot ((t+1)^n-1)$
and evaluating in $\al$ yields $\rho_n\cdot \Phi_3(\al)=0$ in $K$ and therefore $p\mid \rho_n$ or $\phi_3(\al)=0$.
\end{proof}

The following integer valued invariant for polynomials has been defined in \cite{MOE}, Definition $1.3.3$.

\begin{defi}
Let $r\in \Z[t]$ be a nonzero polynomial of degree $d$ and leading coefficient $a\in \Z$. Define an invariant
$\delta(r)$ by $\delta(r)=r$ for $d=0$ and by
\[
\delta(r)=a^{1+2d^2}\cdot (m-1)!\cdot \prod_{\substack{1\le i,j\le \ell \\ i\neq j}} (\la_i-\la_j)^m
\]
for $d\ge 1$, where $\la_1,\cdots,\la_{\ell}$ are the distinct roots of $r$ in $\ov{\Q}$ with corresponding multiplicities
$m_1,\ldots ,m_{\ell}$ and $m:=\max\{ m_1,\ldots ,m_{\ell}\}$.
\end{defi}

It is obvious from the definition that $\delta(r)$ is nonzero. Moreover it was shown in \cite{MOE}, Lemma $4.3.1$ that
$\de(r)$ is an {\em integer} since it can be expressed as a determinant of a certain Sylvester matrix of polynomials with
integer coefficients.  The notation in \cite{MOE} for $\delta(r)$ is also ${\rm Discr}_*(r(t))$, indicating that the
invariant is a certain modification of the discriminant of $r$. It can be computed without having to extract roots of the
polynomial $r$. 

\begin{ex}
Let $r=t^n-1$. Then $\delta(r)$ coincides with the usual discriminant of $r$. We have
\begin{align*}
\delta(r) & = \operatorname{disc}(t^n-1) \\
          & = \prod_{1\le i<j\le n}(\zeta^i-\zeta^j)^2 \\
          & = (-1)^{\frac{n(n-1)}{2}}\cdot (-1)^{n-1}\cdot n^n,
\end{align*}
where $\zeta$ is a primitive $n$-th root of unity.
\end{ex}
  
Let us define another integer valued invariant for polynomials here. It is a natural analogue of
the invariant ${\rm Prod}(r(t))$ defined in \cite{MOE}, Definition $1.3.3$.

\begin{defi}
Let $r\in \Z[t]$ be a nonzero polynomial of degree $d$ and leading coefficient $a\in \Z$. Define an invariant $\si(r)$ 
by $\si(r)=1$ for $d=0$ and by 
\begin{align*}
  \si(r) & = a^{2d^3}\cdot \prod_{\substack{1\le i,j\le \ell\\ r(\la_i+\la_j)\neq 0}}r(\la_i+\la_j) \\
         & = a^{2d^3}\cdot \prod_{\substack{1\le i,j,k\le \ell\\ r(\la_i+\la_j)\neq 0}}a\cdot (\la_i+\la_j-\la_k)^{m_k}
\end{align*}
for $d\ge 1$, where $\la_1,\cdots,\la_{\ell}$ are the distinct roots of $r$ in $\ov{\Q}$ with corresponding multiplicities
$m_1,\ldots ,m_{\ell}$.
\end{defi}

Again it is clear from the definition that $\si(r)$ is nonzero. Since we can rewrite  $\si(r)$ as a certain resultant
of two polynomials with integer coefficients (compare with Lemma $4.3.3$
of \cite{MOE}) we see that $\si(r)$ is always an integer. 

\begin{prop}
Let $r=t^n-1$. Then we have
\[
\si (r)=
\begin{cases}
\displaystyle (-\rho_n)^n & \text{ if } n\not\equiv 0\bmod 6,\\[0.3cm]  
\displaystyle \left(\frac{n^2\rho_n}{3}\right)^n &  \text{ if } n\equiv 0\bmod 6.
\end{cases}
\]
\end{prop}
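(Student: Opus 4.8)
The plan is to evaluate the defining product for $\si(r)$ directly, exploiting that $r=t^n-1$ is monic (so $a=1$) with $n$ distinct simple roots $\ze^0,\dots,\ze^{n-1}$, where $\ze$ is a primitive $n$-th root of unity. With all $m_k=1$ the definition collapses to
\[
\si(r)=\prod_{\substack{0\le i,j\le n-1\\ (\ze^i+\ze^j)^n\neq 1}}\bigl((\ze^i+\ze^j)^n-1\bigr).
\]
The first step is the reduction $(\ze^i+\ze^j)^n=\ze^{in}(1+\ze^{\,j-i})^n=(1+\ze^{\,j-i})^n$, valid since $\ze^{in}=1$. Thus each factor depends only on the residue $c:=(j-i)\bmod n$, the vanishing condition is the same for all pairs of a given $c$, and exactly $n$ ordered pairs $(i,j)$ share each $c$. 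Grouping accordingly,
\[
\si(r)=\Bigl(\,\prod_{\substack{c\in\Z/n\Z\\ (1+\ze^c)^n\neq 1}}\bigl((1+\ze^c)^n-1\bigr)\Bigr)^{\!n}.
\]
Writing $g=(t+1)^n-1$, the inner factors are the values $g(\ze^c)$, so the bracket is a sub-product of $\prod_{c}g(\ze^c)=R(t^n-1,(t+1)^n-1)$.

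For $n\not\equiv 0\bmod 6$ the gcd Lemma gives $\gcd(t^n-1,(t+1)^n-1)=1$, so no value $g(\ze^c)$ vanishes and the bracket is the \emph{full} resultant. Hence the bracket equals $\rho_n$ up to the overall sign fixed by the Sylvester-determinant normalisation, and raising to the $n$-th power yields $(-\rho_n)^n$ as asserted.

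For $6\mid n$ the bracket omits precisely the $c$ with $g(\ze^c)=0$. By the gcd Lemma (and its proof) the common roots of $t^n-1$ and $g$ are the primitive cube roots $\om,\om^2$, so the missing residues are $c=n/3$ and $c=2n/3$, and $\Phi_3=(t-\om)(t-\om^2)$ is their gcd. Using the definition $\rho_n=R\bigl(\frac{t^n-1}{\Phi_3},\frac{g}{\Phi_3}\bigr)=\prod_{c\neq n/3,\,2n/3}(g/\Phi_3)(\ze^c)$ together with $(g/\Phi_3)(\ze^c)=g(\ze^c)/\Phi_3(\ze^c)$ for the surviving $c$, the bracket becomes
\[
\prod_{c\neq n/3,\,2n/3}\bigl((1+\ze^c)^n-1\bigr)=\rho_n\cdot\!\!\prod_{c\neq n/3,\,2n/3}\!\!\Phi_3(\ze^c),
\]
so everything reduces to evaluating the last product.

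The main computation, and the delicate point, is $\prod_{c\neq n/3,\,2n/3}\Phi_3(\ze^c)=n^2/3$. I would compute it by factoring $\Phi_3(\ze^c)=(\ze^c-\om)(\ze^c-\om^2)$ and treating the two removed residues separately. Removing only the vanishing factor, $\prod_{c\neq n/3}(\ze^c-\om)$ is, up to sign, the value at $t=\om$ of $(t^n-1)/(t-\om)$, which by L'H\^opital equals $(t^n-1)'|_{t=\om}=n\om^{-1}$; similarly $\prod_{c\neq 2n/3}(\ze^c-\om^2)=n\om^{-2}$ up to sign. Dividing out the remaining nonzero crossed factors $\ze^{2n/3}-\om=\om^2-\om$ and $\ze^{n/3}-\om^2=\om-\om^2$ and simplifying with $\om^3=1$ and $(\om-\om^2)^2=-3$, so that the denominator $(\om^2-\om)(\om-\om^2)=3$, gives $n^2/3$. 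Substituting back yields $\si(r)=(n^2\rho_n/3)^n$. The obstacle is exactly this residue computation: one must excise the correct two factors, keep the L'H\^opital signs consistent, and recognise the denominator $3$. The coprimality input from the gcd Lemma is what guarantees there is nothing to excise in the non-divisible case, so both cases are governed by the same mechanism.
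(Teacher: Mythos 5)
Your proof is correct, and its first half is the same as the paper's: the same collapse of the double product over ordered pairs $(i,j)$ to the $n$-th power of a product over the residues $c=j-i$, and the same identification of that inner product with the resultant of $t^n-1$ and $(t+1)^n-1$ in the case $6\nmid n$, with the gcd lemma guaranteeing that no factor vanishes. Where you genuinely diverge is the case $6\mid n$. The paper writes the surviving product as $R\bigl((t+1)^n-1,\tfrac{t^n-1}{\Phi_3}\bigr)$, splits off $\rho_n$ by multiplicativity of resultants, decomposes the leftover factor as $\prod_{d\mid n,\,d\neq 3}R(\Phi_3,\Phi_d)$ using $t^n-1=\prod_{d\mid n}\Phi_d$, and then evaluates that product by citing Apostol's theorem \cite{APO} on resultants of cyclotomic polynomials together with a divisor-counting argument yielding $\pm n^2/3$. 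You instead evaluate the correction factor $\prod_{c\neq n/3,\,2n/3}\Phi_3(\zeta^c)$ directly: factor $\Phi_3=(t-\omega)(t-\omega^2)$, compute $\prod_{c\neq n/3}(\omega-\zeta^c)$ as the derivative value $n\omega^{n-1}=n\omega^{-1}$ of $t^n-1$ at $\omega$, and divide out the two crossed factors, whose product is $(\omega^2-\omega)(\omega-\omega^2)=3$; this gives exactly $n^2/3$, as I have checked. Your route is elementary and self-contained (no external cyclotomic-resultant formula, no divisor bookkeeping), at the cost of a slightly fiddlier residue computation; the paper's route is shorter on the page but outsources the arithmetic to Apostol and leaves the final divisor-product identity to the reader.

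One caveat, which applies equally to the paper's own proof: both treatments are loose about the sign coming from the order of the arguments of the resultant. With the Sylvester convention $R(f,g)=\prod_{f(\alpha)=0}g(\alpha)$ for monic $f$, the bracket in the case $6\nmid n$ equals $R(t^n-1,(t+1)^n-1)=\rho_n$ on the nose, so one obtains $\sigma(r)=\rho_n^{\,n}$, which agrees with the stated $(-\rho_n)^n$ only when $n$ is even; for instance $r=t-1$ gives $\sigma(r)=1$ while $(-\rho_1)^1=-1$, and $r=t^3-1$ gives $\sigma(r)=28^3$ while $(-\rho_3)^3=-28^3$. Your phrase ``up to the overall sign fixed by the Sylvester-determinant normalisation'' glosses over exactly the point that the paper glosses over by silently writing $R\bigl((t+1)^n-1,t^n-1\bigr)$ with the arguments swapped, which costs a factor $(-1)^{n^2}$. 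Since $6\mid n$ forces $n$ even, the second case is unaffected, and the discrepancy is invisible to every application in the paper (only the prime divisors of $\rho_n$ matter there), so this is not a gap relative to the paper's own standard of rigor; but if you want your write-up airtight, fix the convention and state the odd-$n$ value as $\rho_n^{\,n}$, or carry the $(-1)^{n^2}$ explicitly.
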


\begin{proof}
Suppose first that $6$ does not divide $n$. Then the sum of two $n$-th roots is never an $n$-th root.
Thus we have  
\begin{align*}
\sigma(r) & = \prod_{0\le i,j\le n-1}((\zeta^i+\zeta^j)^n-1) \\
          & = \prod_{0\le i,j\le n-1}((\zeta^{i-j}+1)^n-1) \\
          & = \prod_{0\le k\le n-1}((\zeta^k+1)^n-1)^n \\
          & = (R((t+1)^n-1,t^n-1))^n \\
          & = (-\rho_n)^n.
\end{align*}
If $6\mid n$ then the sum of two $n$-th roots is an $n$-th root if and only if the ratio is a primitive third
root of unity. So we have
\begin{align*}
\si (r) & = \prod_{\substack{0\le k\le n-1 \\ (\zeta^k+1)^n\neq 1}}((\zeta^k+1)^n-1)^n \\
        & = \prod_{\substack{0\le k\le n-1 \\ \Phi_3(\zeta^k)\neq 0}}((\zeta^k+1)^n-1)^n \\
        & = \left( R\left((t+1)^n-1,\frac{t^n-1}{\Phi_3}\right)\right)^n \\
        & = (-\rho_n)^n\cdot \left( R\left(\Phi_3,\frac{t^n-1}{\Phi_3}\right) \right)^n\\
        & = (-\rho_n)^n\cdot \prod_{\substack{d\mid n\\ d\neq 3}}R(\Phi_3,\Phi_d)^n.
\end{align*}
Now in \cite{APO} it is shown that for integers $n\ge m\ge 1$ we have
\[
R(\Phi_m,\Phi_n)=
\begin{cases} p^{\phi(m)} & \text{ if } \frac{n}{m} \text{ is a power of a prime } p,\\
              1          & \text{ otherwise.}
\end{cases}  
\]  
It is easy to see that this implies that
\[
\prod_{\substack{d\mid n\\ d\neq 3}}R(\Phi_3,\Phi_d)=-\frac{n^2}{3}.
\]  
\end{proof}

\section{Lie algebra derivations satisfying a polynomial identity}

Let $\Lg$ be a finite-dimensional Lie algebra over an arbitrary field $K$ of characteristic $p\ge 0$.
Denote by $c(\Lg)$ its nilpotency class and by $\Der(\Lg)$ its derivation algebra. 

\begin{defi}
Let $r\in K[t]$ be a polynomial. We say that a derivation $D\in \Der(\Lg)$ satisfies a {\em polynomial identity}
given by $r$ if $r(D)=0$. 
\end{defi}

An important example for such a polynomial identity is given by $r=t^n-1$. Then $D$ satisfies the polynomial
identity given by $r$ if and only if $D$ is a periodic derivation with $D^n=\id$.
Note that a periodic derivation is nonsingular. The existence of a nonsingular derivation already has a strong
implication on the structure of the Lie algebra. \\[0.2cm]
Let us first assume that $K$ has characteristic zero. Jacobson showed in  \cite{JAC} the following result.

\begin{prop}[Jacobson]\label{3.2}
Let $\Lg$ be a Lie algebra  over a field of characteristic zero admitting a nonsingular derivation. Then
$\Lg$ is nilpotent.  
\end{prop}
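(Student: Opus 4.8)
The plan is to exploit the generalized eigenspace decomposition of $\Lg$ with respect to $D$ and to reduce the nilpotency of $\Lg$ to a statement about gradings with trivial zero-component. Since nilpotency of a finite-dimensional Lie algebra is unaffected by extension of the base field, I would first pass to the algebraic closure and assume $K=\ov{K}$. Writing $\Lg=\bigoplus_{\la}\Lg_\la$ for the decomposition into generalized eigenspaces $\Lg_\la=\{x\mid (D-\la\,\id)^Nx=0 \text{ for } N\gg 0\}$ of the operator $D$, the fact that $D$ is a derivation yields the key inclusion $[\Lg_\la,\Lg_\mu]\subseteq\Lg_{\la+\mu}$, where $\Lg_{\la+\mu}:=0$ whenever $\la+\mu$ is not an eigenvalue. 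This is checked from the shifted Leibniz rule $(D-(\la+\mu)\id)^N[x,y]=\sum_{i}\binom{N}{i}[(D-\la\id)^ix,(D-\mu\id)^{N-i}y]$. Because $D$ is nonsingular, $0$ is not an eigenvalue; this is the only place where nonsingularity enters, and it is decisive.

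Next I would record the structural consequences of having no zero-eigenvalue. Let $\Lambda$ be the (finite) set of eigenvalues; then $0\notin\Lambda$, and $\Lambda$ generates a finitely generated torsion-free subgroup $A\cong\Z^r$ of $(\ov{K},+)$, so the decomposition above is an $A$-grading of $\Lg$ with trivial zero-component. Two features follow. First, each homogeneous element $x\in\Lg_\la$ is $\ad$-nilpotent: since $\ad(x)^N$ maps $\Lg_\mu$ into $\Lg_{\mu+N\la}$ and the shifts $\mu+N\la$ are pairwise distinct for distinct $N$ (as $\la\neq 0$ in characteristic zero), only finitely many can lie in the finite set $\Lambda$, forcing $\ad(x)^N=0$ for $N$ large. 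Second, choosing a group homomorphism $\phi\colon A\to\Z$ with $\phi(\la)\neq 0$ for every $\la\in\Lambda$ (such a $\phi$ exists because $A$ is free and $\Lambda$ is a finite subset of $A\setminus\{0\}$), one obtains a $\Z$-grading $\Lg=\bigoplus_{n\in\Z}\Lg^{(n)}$ with $\Lg^{(n)}=\bigoplus_{\phi(\la)=n}\Lg_\la$, which has only finitely many nonzero components and satisfies $\Lg^{(0)}=0$.

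It then remains to prove the purely graded statement: a finite-dimensional Lie algebra that is $\Z$-graded with finitely many nonzero components and trivial degree-zero component is nilpotent. I expect this to be the main obstacle, and it is genuinely the hard part. One cannot simply bound the length of nonzero iterated brackets by the additive growth of weights, because positive and negative weights may cancel; already the weight set $\{1,-1\}$ admits arbitrarily long sums that again equal a weight, so additive combinatorics alone is insufficient and the Lie bracket relations must be used in an essential way. In fact this graded statement is exactly equivalent to the proposition, since any such grading defines the nonsingular derivation acting as multiplication by $\phi(\la)$ on $\Lg_\la$. The resolution is the Higman--Kreknin--Kostrikin circle of ideas on graded Lie algebras, which is precisely the machinery the paper develops: the integer root set $\phi(\Lambda)$ contains no zero and is therefore arithmetically-free, so the general Theorem $\ref{3.10}$ applies and bounds the class by $c(\Lg)\le H(n)$. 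I would therefore finish either by invoking that graded nilpotency theorem directly or by appealing to Theorem $\ref{3.10}$ for the arithmetically-free root set; in every case the nilpotency of $\Lg$ follows.
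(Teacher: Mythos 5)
Your proof is correct, but it takes a noticeably different (and longer) route than the paper's own derivation, which appears in Remark \ref{3.13}. The paper disposes of Jacobson's theorem in two lines: by Cayley--Hamilton the characteristic polynomial $r=\chi_D$ satisfies $r(D)=0$, and nonsingularity gives $r(0)=\pm\det(D)\neq 0$; Corollary \ref{3.12} then yields nilpotency of class at most $H(n)$ with $n=\dim\Lg$. You never invoke Cayley--Hamilton; instead, nonsingularity enters your argument as ``$0$ is not an eigenvalue,'' and you reconstruct by hand the generalized eigenspace grading $[\Lg_\la,\Lg_\mu]\subseteq\Lg_{\la+\mu}$ over $\ov{K}$ --- which is precisely the internal content of the proof of Theorem \ref{3.10} --- before adding a further reduction, via a homomorphism $\phi\colon A\to\Z$ nonvanishing on the eigenvalues, to a $\Z$-grading with trivial zero component. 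That extra reduction is sound (the avoidance argument works because $\Hom(A,\Z)$ is not a finite union of the proper subgroups annihilating a fixed nonzero $\la$), but it is unnecessary: in characteristic zero any finite subset of $(\ov{K},+)$ not containing $0$ is already arithmetically-free, so Theorem \ref{3.10}, or Theorem 3.14 of \cite{MON}, applies directly to the eigenvalue support, exactly as in the proof of Corollary \ref{3.12}. You are also right, and commendably explicit, that the genuinely hard step is the graded nilpotency theorem; neither your argument nor the paper's avoids it, as both ultimately rest on the same result of \cite{MON} (and your recorded observation that homogeneous elements are $\ad$-nilpotent, while true, cannot shortcut this, since Engel's theorem needs all elements, not just homogeneous ones, to be $\ad$-nilpotent). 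What your packaging buys is contact with the classical literature on $\Z$-graded Lie algebras and an explicit picture of where the grading and the nonsingularity hypothesis are used; what the paper's buys is brevity and the clean bound $c(\Lg)\le H(\dim\Lg)$, obtained without ever leaving the group $(\ov{K},+)$.
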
  

In case the derivation is even periodic, one can hope to find in addition an effective bound for the nilpotency class.
Indeed, in Kostrikin and Kuznetsov \cite{KUK} showed the following result.

\begin{prop}
Let $\Lg$ be a Lie algebra  over a field of characteristic zero admitting a periodic derivation of order $n$
such that $n$ is not divisible by $6$. Then $\Lg$ is abelian.
\end{prop}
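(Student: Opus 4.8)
The plan is to exploit the fact that in characteristic zero the polynomial $t^n-1$ is separable, so after extending scalars the periodic derivation becomes diagonalizable and induces a grading on $\Lg$. First I would reduce to the case where $K$ is algebraically closed: since $\Lg$ is abelian if and only if $\Lg\otimes_K\ov{K}$ is abelian, and $D$ extends to a periodic derivation of the same order $n$ on $\Lg\otimes_K\ov{K}$, there is no loss in assuming that $K$ contains all $n$-th roots of unity. As $D^n=\id$ and $t^n-1$ has $n$ distinct roots in characteristic zero, the transformation $D$ is diagonalizable, and we obtain the eigenspace decomposition
\[
\Lg=\bigoplus_{\zeta^n=1}\Lg_\zeta,\qquad \Lg_\zeta=\{x\in\Lg\mid Dx=\zeta x\}.
\]

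Next I would record the graded behaviour of the bracket. For $x\in\Lg_\alpha$ and $y\in\Lg_\beta$ the derivation property gives $D[x,y]=[Dx,y]+[x,Dy]=(\alpha+\beta)[x,y]$, so that $[\Lg_\alpha,\Lg_\beta]\subseteq\Lg_{\alpha+\beta}$. Since the only nonzero summands $\Lg_\gamma$ are those indexed by $n$-th roots of unity $\gamma$, the bracket $[\Lg_\alpha,\Lg_\beta]$ vanishes unless $\alpha+\beta$ is itself an $n$-th root of unity. Proving that $\Lg$ is abelian therefore reduces to the purely arithmetic claim that, when $6\nmid n$, the sum $\alpha+\beta$ of two $n$-th roots of unity is never again an $n$-th root of unity.

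This arithmetic claim is the heart of the argument and the step I expect to require the most care. To settle it, suppose for contradiction that $\alpha+\beta=\gamma$ with $\alpha^n=\beta^n=\gamma^n=1$; dividing by $\beta$ and setting $\omega=\alpha/\beta$ yields $\omega^n=1$ and $(\omega+1)^n=(\gamma/\beta)^n=1$, so that $\omega$ is a common root of $t^n-1$ and $(t+1)^n-1$. But by the computation of $\gcd(t^n-1,(t+1)^n-1)$ in the first lemma of this section, for $6\nmid n$ these two polynomials are coprime in $\Z[t]$ and hence share no root in $\ov{K}$, a contradiction. (The same fact is also recorded in the computation of $\sigma(t^n-1)$.) Consequently $\alpha+\beta$ is never an $n$-th root of unity, every bracket $[\Lg_\alpha,\Lg_\beta]$ vanishes, and therefore $[\Lg,\Lg]=0$; that is, $\Lg$ is abelian.
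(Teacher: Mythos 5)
Your proof is correct and takes essentially the same route as the paper: although the paper itself only cites Kostrikin--Kuznetsov for this proposition, its own generalization (Theorem $\ref{3.6}$, Case 1, specialized to $p=0$) proceeds exactly as you do --- reduce to $\ov{K}$, diagonalize the periodic derivation, use the grading $[\Lg_{\al},\Lg_{\be}]\subseteq\Lg_{\al+\be}$, and derive a contradiction from the ratio $\al/\be$ being a common root of $t^n-1$ and $(t+1)^n-1$. The only cosmetic difference is that you close the argument with the coprimality of these polynomials in $\Z[t]$ (the paper's first lemma of Section 2), whereas the paper phrases the same contradiction through the nonvanishing of the resultant $\rho_n$ via Lemma $\ref{2.5}$.
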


There is no conclusion here for the case that $6\mid n$. To fill the gap, we proved in \cite{BU45}
the following result over $\C$.

\begin{prop}\label{3.4}
Let $\Lg$ be a complex Lie algebra admitting a periodic derivation. Then $\Lg$ is nilpotent of class 
$c(\Lg)\le 2$.
\end{prop}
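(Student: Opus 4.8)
The plan is to exploit the eigenspace decomposition of $D$ together with the arithmetic already isolated in Lemma~$2.2$ of \cite{BU45}. In characteristic zero the relation $D^n=\id$ means that the minimal polynomial of $D$ divides the separable polynomial $t^n-1$, so $D$ is semisimple and all its eigenvalues are $n$-th roots of unity. First I would write $\Lg=\bigoplus_{\la}\Lg_\la$ for the honest eigenspace decomposition (which spans $\Lg$ because $t^n-1$ splits with distinct roots) and record that it is an \emph{additive} grading: from $D[x,y]=[Dx,y]+[x,Dy]$ one gets $[\Lg_\la,\Lg_\mu]\subseteq\Lg_{\la+\mu}$. The key point is that $\Lg_{\la+\mu}=0$ unless $\la+\mu$ is again an $n$-th root of unity, since those are the only eigenvalues $D$ can have.

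Second, I would turn this into an arithmetic constraint on nonzero brackets. If $[\Lg_\la,\Lg_\mu]\neq 0$ then $\la,\mu$ and $\la+\mu$ all lie in the $n$-th roots of unity; dividing by $\mu$ (nonzero, being a root of unity) and putting $\de=\la/\mu$, this says precisely that $\de^n=1$ and $(\de+1)^n=1$. By Lemma~$2.2$ of \cite{BU45} such a $\de$ exists only when $6\mid n$, and then $\de$ is a primitive cube root of unity, i.e.\ $\de\in\{\om,\om^2\}$. In particular, if $6\nmid n$ then no nonzero bracket can occur and $\Lg$ is abelian, recovering the Kostrikin--Kuznetsov bound; so from here on I may assume $6\mid n$ and that every nonzero bracket $[\Lg_\la,\Lg_\mu]$ satisfies $\la/\mu\in\{\om,\om^2\}$.

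Third, to prove $c(\Lg)\le 2$ it suffices to show $[[x,y],z]=0$ for homogeneous $x\in\Lg_\la$, $y\in\Lg_\mu$, $z\in\Lg_\nu$. Assume both the inner and the outer bracket are nonzero. The inner one forces $\la=\om^a\mu$ with $a\in\{1,2\}$, and $[x,y]\in\Lg_{\la+\mu}$ where $\la+\mu=(\om^a+1)\mu=-\om^{-a}\mu$, using $\om+1=-\om^2$ and $\om^2+1=-\om$. The outer bracket forces $(\la+\mu)/\nu=\om^b$ with $b\in\{1,2\}$, whence $\nu=-\om^{-a-b}\mu$. Now I would just compute the two cross-ratios $\mu/\nu=-\om^{a+b}$ and $\la/\nu=-\om^{2a+b}$: cubing either gives $-1\neq 1$, so both are primitive \emph{sixth} roots of unity and in particular not cube roots of unity. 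By the constraint from the second step this means $[y,z]=0$ and $[x,z]=0$, and the Jacobi identity $[[x,y],z]=-[[y,z],x]-[[z,x],y]$ then forces $[[x,y],z]=0$, contrary to assumption. Hence $\ga_3(\Lg)=0$ and $c(\Lg)\le 2$.

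I expect the main obstacle to be this third step: one must keep the multiplicative bookkeeping among cube and sixth roots of unity completely straight and verify that the two Jacobi cross-terms always vanish, since the grading \emph{alone} permits a nonzero triple bracket and only the Jacobi identity rules it out. A secondary point worth checking is that brackets never leave a single orbit of eigenvalues under multiplication by sixth roots of unity, so that the computation above really covers all relevant triple products; incidentally this also exhibits $\Lg$ as a direct sum of ideals indexed by those orbits, which is a clean way to package the argument.
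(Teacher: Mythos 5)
Your proof is correct. Note that the paper does not reprove Proposition \ref{3.4} itself (it is quoted from \cite{BU45}); the in-text argument to compare with is the proof of Theorem \ref{3.6}, which contains Proposition \ref{3.4} as the case $p=0$. Your proof follows the same skeleton as that one: diagonalize $D$ (its minimal polynomial divides the separable polynomial $t^n-1$), use the additive grading $[\Lg_\la,\Lg_\mu]\subseteq\Lg_{\la+\mu}$, and invoke Lemma $2.2$ of \cite{BU45} to conclude that a nonzero bracket forces the eigenvalue ratio to be a primitive third root of unity, so that $\Lg$ is abelian when $6\nmid n$. Where you genuinely diverge is the endgame for $6\mid n$: the paper assumes $[[x,y],z]\neq 0$, uses the Jacobi identity to assume in addition that $[z,x]\neq 0$, and then derives a contradiction from the three cube-root ratios via the resultants $R(t^2+t+1,\,t^i-t^{j+k}+1)\in\{1,4\}$ --- machinery chosen because Theorem \ref{3.6} must work in any characteristic $p\nmid\rho_n$. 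You instead compute that the ratios $\mu/\nu$ and $\la/\nu$ cube to $-1$, hence are not cube roots of unity, so the companion brackets $[y,z]$ and $[z,x]$ vanish outright and the Jacobi identity kills $[[x,y],z]$ with no case analysis. Over $\C$ your version is cleaner: no WLOG step and no resultant computation. (In fact your endgame would also survive in characteristic $p>3$, since there $-1\neq 1$ and roots of $\Phi_3$ are still primitive cube roots; the restriction $p>3$ is exactly what the paper extracts from $6\mid\rho_m$ and $p\nmid\rho_n$ before its own Case 2.) The bookkeeping in your third step checks out: with $\la=\om^a\mu$ and $(\la+\mu)/\nu=\om^b$ one indeed gets $\la+\mu=-\om^{-a}\mu$, $\nu=-\om^{-a-b}\mu$, and the two cross-ratios $-\om^{a+b}$, $-\om^{2a+b}$, so the argument is complete.
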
  

In characteristic $p>0$ the situation is much more complicated. Jacobson's result is no longer true and even simple
modular Lie algebras may admit a periodic derivation for any prime characteristic. We summarize the following
classification result by Benkart, Kostrikin and Kuznetsov \cite{BKK,KOK}.

\begin{prop}
Let $\Lg$ be a simple modular Lie algebra over an algebraically closed field of characteristic $p>7$. Then the following
statements are equivalent.
\begin{itemize}
\item[$(1)$] $\Lg$ admits a periodic derivation.
\item[$(2)$] $\Lg$ admits a nonsingular derivation.
\item[$(3)$] $\Lg$ is either a special Lie algebra $S(m;{\bf n},\om_2)$, or a Hamiltonian Lie algebra 
$H(m;{\bf n},\om_2)$ as specified in \cite{BKK}.
\end{itemize}
\end{prop}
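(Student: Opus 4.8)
The plan is to treat the three implications $(1)\Rightarrow(2)$, $(2)\Rightarrow(3)$ and $(3)\Rightarrow(1)$ separately, with the bulk of the work lying in the middle one. The implication $(1)\Rightarrow(2)$ is immediate: if $D^n=\id$ then $D$ is invertible with inverse $D^{n-1}$, hence nonsingular, exactly as observed in the remark preceding Proposition~$\ref{3.2}$. The remaining two implications both rest on a single reformulation, which I would establish first.

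The reformulation rephrases the existence of a nonsingular derivation as a grading condition. Since the $p$-th power of a derivation is again a derivation, $\Der(\Lg)$ is a restricted Lie subalgebra of $\mathfrak{gl}(\Lg)$ and carries a Jordan decomposition; replacing a nonsingular $D$ by its semisimple part $D_s\in\Der(\Lg)$ keeps it nonsingular, because the nilpotent part $D_n=D-D_s$ commutes with $D_s$ and contributes only a unipotent factor to $D$. Diagonalizing $D_s$ over the algebraically closed field $K$ gives $\Lg=\bigoplus_{\la\in X}\Lg_\la$, where $X\subseteq K\setminus\{0\}$ is the eigenvalue set and $[\Lg_\la,\Lg_\mu]\subseteq\Lg_{\la+\mu}$ since $D_s$ is a derivation. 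Conversely, any grading of $\Lg$ by a finite subset $X$ of $(K,+)\setminus\{0\}$ produces a nonsingular derivation by letting $D$ act as $\la\cdot\id$ on $\Lg_\la$. Thus $(2)$ is \emph{equivalent} to the existence of such a zero-free grading, which is precisely the arithmetic-closure viewpoint of the present paper and the bridge to the eigenvalue combinatorics governed by $X_{n,p}$.

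The heart of the argument is $(2)\Rightarrow(3)$. Here I would invoke the Block--Wilson--Strade--Premet classification: for $p>7$ every finite-dimensional simple Lie algebra over $K$ is classical or of Cartan type $W$, $S$, $H$ or $K$, and I would test each family against the zero-free grading condition. For classical $\Lg$ all derivations are inner, and $\ad x$ always contains $x$ in its kernel, so no nonsingular derivation exists. For the Cartan-type algebras one uses the explicit description of $\Der$ and of the gradings coming from the divided-power structure: the Witt algebras $W(m;\mathbf n)$ and the contact algebras $K(m;\mathbf n)$ force $\Lg_0\neq0$ in every grading induced by a derivation, so only the special algebras $S(m;\mathbf n,\om_2)$ and Hamiltonian algebras $H(m;\mathbf n,\om_2)$ attached to the particular defining form $\om_2$ survive, the other defining forms being excluded. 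This case analysis is the technical core carried out in \cite{BKK,KOK}, and it is the step I expect to be the main obstacle.

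Finally, for $(3)\Rightarrow(1)$ I would exhibit an explicit periodic derivation on $S(m;\mathbf n,\om_2)$ and $H(m;\mathbf n,\om_2)$. These algebras carry a natural multigrading by the monomial degrees of the underlying divided-power algebra, and a toral derivation acting by scalars on the homogeneous components realizes the zero-free grading of the previous step. The subtlety is that periodicity demands more than nonsingularity: when $p\nmid n$ the eigenvalues must be $n$-th roots of unity that are \emph{closed under the additive rule} $\la,\mu\mapsto\la+\mu$ dictated by the bracket, i.e. the eigenvalue set must fail to be arithmetically-free. The form $\om_2$ is exactly what makes such a consistent assignment possible, so one rescales the weights until they become $n$-th roots of unity satisfying the required additive closure and checks $D^n=\id$ directly. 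This closes the cycle of implications and dovetails with the role of $\CB_p$ in the sequel.
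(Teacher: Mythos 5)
First, a point of comparison: the paper does not actually prove this proposition --- it is stated explicitly as a \emph{summary} of the classification result of Benkart, Kostrikin and Kuznetsov, so the equivalence $(2)\Leftrightarrow(3)$, which is the entire mathematical content, is simply cited from \cite{BKK,KOK}. Your decision to defer exactly that implication to the same references is therefore the only realistic option and matches the paper's intent, and your $(1)\Rightarrow(2)$ is correct. The problems lie in the parts you do try to argue. In $(2)\Rightarrow(3)$, the claim that all derivations of a classical simple Lie algebra are inner is false in characteristic $p$: for type $A_n$ with $p\mid n+1$ the simple algebra is $\mathfrak{psl}_{n+1}$, and its derivation algebra contains $\mathfrak{pgl}_{n+1}$, which is strictly larger than the image of $\ad$; so the argument ``a nonsingular derivation would be $\ad x$, which kills $x$'' does not dispose of all classical algebras, and this family needs (and in \cite{BKK} receives) separate treatment. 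More seriously, your $(3)\Rightarrow(1)$ rests on a false premise: for the graded Cartan-type algebras the monomial-degree grading has a nonzero component in degree zero (it contains the elements $x_i\partial_j$), so the toral derivation attached to that grading is singular; moreover the algebras $S(m;{\bf n},\om_2)$ and $H(m;{\bf n},\om_2)$ are \emph{filtered} deformations attached to the non-homogeneous form $\om_2$ and do not even carry that grading --- this is precisely why they, and not the graded Cartan-type algebras, admit nonsingular derivations. ``Rescaling the weights until they become $n$-th roots of unity'' is not an argument as it stands.

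The good news is that the ``subtlety'' you worry about (periodicity being strictly stronger than nonsingularity) disappears if you push your own grading reformulation one step further, yielding a classification-free proof of $(2)\Rightarrow(1)$ that closes the cycle without any construction on Cartan-type algebras. Given a nonsingular derivation $D$, its semisimple part $D_s$ is again a nonsingular derivation and grades $\Lg$ by its eigenvalue set $X\subseteq K\setminus\{0\}$. Since ${\rm char}(K)=p$, the subgroup $A$ of $(K,+)$ generated by the finite set $X$ is a finite elementary abelian $p$-group, and since $(\ov{\F}_p,+)\subseteq (K,+)$ is an infinite-dimensional $\F_p$-vector space there is an injective $\F_p$-linear map $\phi\colon A\to \ov{\F}_p$. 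Let $D'$ act on $\Lg_\la$ as $\phi(\la)\cdot\id$. Then $D'$ is a derivation (additivity of $\phi$ together with $[\Lg_\la,\Lg_\mu]\subseteq\Lg_{\la+\mu}$), it is nonsingular ($\phi$ is injective and $0\notin X$), and it is diagonalizable with eigenvalues in $\ov{\F}_p^{\,*}$, each of which lies in a finite field and hence is a root of unity; therefore $D'$ has finite order, i.e.\ $\Lg$ admits a periodic derivation. With this, the proposition reads: $(1)\Rightarrow(2)$ trivial, $(2)\Rightarrow(1)$ as above, and $(2)\Leftrightarrow(3)$ cited from \cite{BKK,KOK} --- which is presumably how the authors of the paper understand it.
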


So we cannot conclude in general that a Lie algebras admitting a periodic derivation is nilpotent. On the other
hand we might be able to enforce nilpotency by adding further assumptions. Indeed, Kostrikin and Kuznetsov \cite{KUK}
showed that a modular Lie algebra of characteristic $p>0$ admitting a periodic derivation of order $n$ must be abelian
provided that $6\nmid n$ and that $p$ does not divide $\rho_n$. We generalize this result for Lie algebras in
arbitrary characteristic, including the case $6\mid n$.  

\begin{thm}\label{3.6}
Let $\Lg$ be a Lie algebra  over a field $K$ of arbitrary characteristic $p\ge 0$. Suppose that $\Lg$ admits a periodic
derivation $D$ of order $n$ such that $p$ does not divide $\rho_n$. Then $\Lg$ is nilpotent of class
\[
c(\Lg)\le \begin{cases} 1 & \text{ if } n\not\equiv 0\bmod 6, \\
                        2 & \text{ if } n\equiv 0\bmod 6,
\end{cases}  
\]
\end{thm}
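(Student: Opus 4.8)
The plan is to extend scalars to $\ov{K}$ and read the nilpotency class off the grading that $D$ induces on $\Lg$. Set $\Lg_{\ov K}=\Lg\otimes_K\ov K$ and extend $D$ to a derivation satisfying $D^n=\id$ (only this identity, not the exact order, is used); since the lower central series commutes with base change, it suffices to bound the class of $\Lg_{\ov K}$. As $D$ is invertible, $0$ is not an eigenvalue, and the primary decomposition gives $\Lg_{\ov K}=\bigoplus_{\la}\Lg_\la$ with $\Lg_\la=\ker(D-\la\,\id)^{\dim\Lg}$, the sum ranging over the roots $\la$ of $t^n-1$ in $\ov K$. I would \emph{not} assume $D$ semisimple, since this fails exactly when $p\mid n$, a situation $p\nmid \rho_n$ does permit; the generalized eigenspaces still satisfy $[\Lg_\la,\Lg_\mu]\subseteq\Lg_{\la+\mu}$ by the Leibniz expansion $(D-\la-\mu)^N[x,y]=\sum_k\binom{N}{k}[(D-\la)^kx,(D-\mu)^{N-k}y]$ for large $N$, where $\Lg_{\la+\mu}=0$ unless $\la+\mu$ is again a root of $t^n-1$. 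The crucial translation is: $[\Lg_\la,\Lg_\mu]\neq 0$ forces $(\la+\mu)^n=1$, so with $\al:=\mu\la^{-1}$ one has $\al^n=1$ and $(1+\al)^n=(\la^{-1}(\la+\mu))^n=1$, i.e.\ $\al$ is a common root of $t^n-1$ and $(t+1)^n-1$. Lemma $\ref{2.5}$ is precisely the device controlling such $\al$.

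\textbf{Case $6\nmid n$.} Here Lemma $\ref{2.5}$ says that a common root forces $p\mid \rho_n$, which is excluded. Hence no admissible $\al$ exists, so $[\Lg_\la,\Lg_\mu]=0$ for all $\la,\mu$, and $\Lg_{\ov K}$, hence $\Lg$, is abelian: $c(\Lg)\le 1$.

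\textbf{Case $6\mid n$.} First I would record the enabling fact that the characteristic is good: by Lemma $\ref{2.4}$ we have $\rho_6\mid\rho_n$, and since $\rho_6=-2^2\cdot 3\cdot 7^3$ this gives $2,3\mid \rho_n$, so $p\nmid\rho_n$ already forces $p\neq 2,3$. Now Lemma $\ref{2.5}$ says every common root $\al$ satisfies $\Phi_3(\al)=0$, and as $p\neq 3$ this means $\al\in\{\om,\om^2\}$ is a primitive cube root of unity (conversely each $\om$ is a common root, since $1+\om=-\om^2$ and $n$ is even). Consequently $[\Lg_\la,\Lg_\mu]\neq 0$ only when $\mu=\om\la$ (the subcase $\mu=\om^2\la$ is symmetric), and then $\la+\mu=(1+\om)\la=-\om^2\la$. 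To obtain $c(\Lg)\le 2$ I must kill every triple bracket $[[\Lg_\la,\Lg_\mu],\Lg_\nu]$: only $\mu=\om\la$ gives a nonzero inner bracket, landing in $\Lg_{-\om^2\la}$, and a nonzero outer bracket then forces $\nu\in\{-\la,-\om\la\}$. In each situation I would apply the Jacobi identity $[[a,b],c]=[[a,c],b]+[a,[b,c]]$ and observe that both right-hand terms are indexed by forbidden ratios. For $\nu=-\la$ one has $c/a=-1$ and $c/b=-\om^2$, with $\Phi_3(-1)=1\neq 0$ and $\Phi_3(-\om^2)=-2\om^2\neq0$, so neither $-1$ nor $-\om^2$ is a common root and $[a,c]=[b,c]=0$. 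For $\nu=-\om\la$ one has $c/a=-\om$ with $\Phi_3(-\om)=-2\om\neq 0$, while $b+c$ lands in $\Lg_0=0$, so again $[a,c]=[b,c]=0$. (Both computations use $p\neq 2,3$.) Hence every triple bracket vanishes, $\Lg_{\ov K}$ has class $\le 2$, and so does $\Lg$.

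The main obstacle is exactly this last case: the grading alone does not force $c(\Lg)\le 2$ when $6\mid n$, and one must feed the Jacobi identity the information that the intermediate ratios $-1,-\om,-\om^2$ are \emph{not} common roots of $t^n-1$ and $(t+1)^n-1$. The clean way to guarantee this is the observation $p\neq 2,3$ extracted from $\rho_6\mid\rho_n$; without it (e.g.\ in characteristic $3$) the cube roots of unity degenerate and the argument collapses, consistent with the fact that $3\mid\rho_n$ whenever $6\mid n$.
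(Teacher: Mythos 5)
Your proof is correct, and its skeleton — base change to $\ov{K}$, a grading of the algebra by the roots of $t^n-1$, Lemmas \ref{2.4} and \ref{2.5} as the arithmetic engine, a split on divisibility by $6$, and the Jacobi identity in the second case — is the same as the paper's. The genuine difference is how the two delicate points are handled. Where you keep $D$ itself and work with its generalized eigenspaces, justifying $[\Lg_\la,\Lg_\mu]\subseteq\Lg_{\la+\mu}$ via the Leibniz expansion (exactly the right move, since $D$ need not be semisimple when $p\mid n$), the paper instead replaces $D$ by the power $M=D^{p^k}$, where $n=p^km$ with $\gcd(p,m)=1$: this $M$ is periodic of order $m$ prime to $p$, hence semisimple, so the paper can work with honest eigenvectors. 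The two devices are interchangeable here; the paper's reduction forces its case split to be on $6\mid m$ rather than $6\mid n$ (with $6\nmid n\Rightarrow 6\nmid m$ recovering the stated bound), whereas your split matches the statement directly, at the cost of invoking Lemma \ref{2.4} through $\rho_6\mid\rho_n$ instead of $\rho_m\mid\rho_n$. The endgame in the case $6\mid n$ also differs: the paper assumes a nonzero triple bracket, writes the three relevant ratios as powers $\om^i,\om^j,\om^k$ of a primitive cube root of unity, and derives a contradiction from the resultant $R\bigl(t^2+t+1,\,t^i-t^{j+k}+1\bigr)\in\{1,4\}$ having to vanish mod $p$; you instead enumerate the only two possible positions $\nu\in\{-\la,-\om\la\}$ of the third factor and kill both terms of the Jacobi expansion outright, using $\Phi_3(-1)=1$, $\Phi_3(-\om)=-2\om$, $\Phi_3(-\om^2)=-2\om^2$ together with $p\neq 2,3$. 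Your version is somewhat more transparent, since it exhibits exactly which pairwise brackets can survive and why no triple bracket can; the paper's resultant computation packages the same $p\neq 2,3$ obstruction into a single determinant evaluation.
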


\begin{proof}
We may assume that $K$ is algebraically closed since the nilpotency class is preserved under extensions of scalars.
Furthermore there exists a semisimple derivation $M$. For $p=0$ we may take $M=D$.
If $p>0$ then there exists an integer $k\ge 0$ such that $p^k\mid n$ and $\gcd(p,m)=1$ with
$m=\frac{n}{p^k}$. Let $M:=D^{p^k}$. Then $M$ is a periodic derivation of order $m$ dividing $n$. Since the order $m$ of
$M$ is coprime to $p$, the derivation $M$ is semisimple. Since $K$ is algebraically closed we can find an eigenbasis for
$\Lg$ with respect to $M$. Note that all eigenvalues $\la$ satisfy $\la^m=1$. \\[0.2cm]
{\em Case 1:} $6\nmid m$. Assume that $\Lg$ is not abelian. Then there exists eigenvectors $x,y\in \Lg$
with resepctive eigenvalues $\al,\be$ such that $[x,y]\neq 0$. It is easy to verify that $[x,y]$ is an eigenvector
with eigenvalue $\al+\be$. Hence we have $\al^m=\be^m=(\al+\be)^m=1$, so that the ratio $\frac{\al}{\be}$ is a common root
of the polynomials $t^m-1$ and $(t+1)^m-1$. By Lemma $\ref{2.5}$ we have $p\mid \rho_m$ and by Lemma $\ref{2.4}$
we obtain $p\mid\rho_m\mid \rho_n$ since $m\mid n$. This contradicts the assumption. Hence $\Lg$ is abelian. \\[0.2cm]
{\em Case 2:} $6\mid m$. Then $\rho_2=3$ and $\rho_3=2^2\cdot 7$ divide $\rho_m$ by Lemma  $\ref{2.5}$, so that $6\mid \rho_m$.
Hence $p>3$ by our assumptions. Assume that $[\Lg,\Lg],\Lg]\neq 0$. Then there exist eigenvectors $x,y,z$ with respective
eigenvalues $\al,\be,\ga$ such that $[[x,y],z]\neq 0$. We note that $[x,y]$ and $[[x,y],z]$ are also eigenvectors
with respective eigenvalues $\al+\be$ and $\al+\be+\ga$. By using the Jacobi identity we may further assume that $[z,x]$ is
an eigenvector with corresponding eigenvalues $\al+\ga$. But then the ratios $\frac{\al}{\be}$, $\frac{\al}{\ga}$ and
$\frac{\al+\be}{\ga}$ are all common roots of the polynomials $t^m-1$ and $(t+1)^m-1$. By  $\ref{2.5}$ these ratios
are roots of the polynomial $\Phi_3$, so that they have order $1$ or $3$. Since $p>3$, the order is always equal to $3$.
Let $\om\in K$ be an element of order $3$. Then there exists $1\le i,j,k\le 2$ such that $\be=\al\om^i$, $\ga=\al\om^j$
and $\al+\be=\ga\om^k$. By substitution we obtain that $1+\om^i-\om^{j+k}=0$, so that $\om$ is a common root
of $1+t+t^2$ and $1+t^i-t^{j+k}$. This implies that the resultant is zero in $K$, so that $p$ divides
$R(t^2+t+1,t^i-t^{j+k}+1)$, which is $1$ for all $i,j,k$ except for $(i,j,k)=(1,1,1),(2,2,2)$, where it is $4$.
It follows that $p=2$, which is a contradiction. Hence $c(\Lg)\le 2$. \\[0.2cm]
If $6\nmid n$ then also $6\nmid m$, so that we obtain the better bound $c(\Lg)\le 1$ by Case $1$.
\end{proof}  

\begin{rem}
For $p=0$ the assumption that $p\nmid \rho_n$ in the theorem is always satisfied since $\rho_n$ is nonzero.
Hence the result generalizes Proposition $\ref{3.4}$ from complex numbers to an arbitrary field of characteristic
zero. For $p>0$ there is no conclusion from the theorem for $p\mid \rho_n$. 
There exist both nilpotent and non-nilpotent Lie
algebras admitting a periodic derivation of order $n$ with $p\mid \rho_n$ for some $n$ and some $p$, see the
two examples below.
\end{rem}

\begin{ex}
Let $\Lg=W(1;m)$ be the Zassenhaus Lie algebra of dimension $2^m-1$ over $\F_2$ in characteristic $p=2$. It admits
a periodic derivation of order $n=2^m-1$, see \cite{BKK}. For all $m\ge 2$ we have $2\mid \rho_{2^m-1}$, so that
there is no conclusion from Theorem $\ref{3.6}$. And in fact $\Lg$ is simple and hence non-nilpotent.
\end{ex}

\begin{ex}
Let $\Lg$ be the free-nilpotent Lie algebra over $\F_3$ with $3$ generators of nilpotency class $2$.
It has a periodic derivation of order $6$. Since $3\mid \rho_6$ we cannot apply Theorem $\ref{3.6}$, but
nevertheless the conclusion holds. Indeed, $\Lg$ is $2$-step nilpotent.
\end{ex}

We will now generalize Theorem $\ref{3.6}$ from periodic derivations to derivations satisfying an arbitrary
polynomial identity. For this we use the methods and results from \cite{MON,MOE}. Recall
that a subset $X$ of an additive group $(G,+)$ is called {\em arithmetically-free} if $X$ does not contain
any arithmetic progression of the form $\al,\al+\be$,$\al+2\be,\ldots$, with $\al,\be\in X$. Let $H\colon \N\ra \N$
be the generalized Higman map, as defined in \cite{MON}.

\begin{thm}\label{3.10}
Let $K$ be a field of characteristic $p\ge 0$, $r\in K[t]$ be a polynomial of degree $n\ge 0$ and
$X=\{\al \in \ov{K} \mid r(a)=0 \}$ be the set of roots in $\ov{K}$.
If $X$ is an arithmetically-free subset of $(\ov{K},+)$, then every Lie algebra $\Lg$ over $K$ admitting
a derivation $D$, which satisfies $r(D)=0$, is nilpotent of class $c(\Lg)\le H(n)$. 
\end{thm}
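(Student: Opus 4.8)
The plan is to convert the single operator identity $r(D)=0$ into a \emph{grading} of $\Lg$ by the additive group $(\ov{K},+)$ whose support lies in the root set $X$, and then to feed this grading into the generalized Higman machinery of \cite{MON}. First I would extend scalars to the algebraic closure: the nilpotency class is unchanged under $\ov{K}\otimes_K(-)$ and $D$ extends to a derivation still annihilated by $r$, so I may assume $K=\ov{K}$ and $r=a\prod_{\al\in X}(t-\al)^{m_\al}$. The relation $r(D)=0$ turns $\Lg$ into a module over the Artinian ring $\ov{K}[t]/(r)$ with $t$ acting as $D$; by the Chinese Remainder Theorem this ring is the product of the local rings $\ov{K}[t]/((t-\al)^{m_\al})$, which splits $\Lg$ into its generalized eigenspaces $\Lg=\bigoplus_{\al\in X}\Lg_\al$ with $\Lg_\al=\ker (D-\al)^{m_\al}$. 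This step requires no finiteness of $\dim\Lg$.

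Next I would check that this decomposition is compatible with the bracket. Since $D$ is a derivation, the Leibniz rule gives $(D-(\al+\be))[x,y]=[(D-\al)x,y]+[x,(D-\be)y]$, and iterating, $(D-(\al+\be))^N[x,y]$ is a sum of brackets $[(D-\al)^i x,(D-\be)^j y]$ with $i+j=N$; for $x\in\Lg_\al$ and $y\in\Lg_\be$ a large $N$ kills every summand. Hence $[\Lg_\al,\Lg_\be]\subseteq\Lg_{\al+\be}$. Every eigenvalue of $D$ is a root of its minimal polynomial, which divides $r$, so the support of the grading lies in $X$; as $D-c$ is invertible on each $\Lg_\al$ for $c\notin X$, the space $\Lg_{\al+\be}$ vanishes whenever $\al+\be\notin X$, giving $[\Lg_\al,\Lg_\be]=0$ there. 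Thus $\Lg$ is graded by $(\ov{K},+)$ with support contained in the finite set $X$, where $|X|\le n$.

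From here the problem is purely combinatorial and I would invoke the generalized Higman bound. The decisive hypothesis is that $X$ is arithmetically-free: this is precisely the condition under which a grading supported in $X$ is forced to be nilpotent of class at most $H(|X|)\le H(n)$, with $H$ the generalized Higman map of \cite{MON}. Concretely, if $\Lg$ were not nilpotent of class $\le H(n)$, there would be a nonzero left-normed bracket of length exceeding $H(n)$ whose homogeneous factors have degrees $\al_1,\ldots,\al_\ell\in X$ and all of whose sub-brackets also have degrees in $X$; one then uses the Jacobi identity to reorganize this monomial and, from a long enough degree sequence, to force a full arithmetic progression $\ga,\ga+\de,\ga+2\de,\ldots$ with $\ga,\de\in X$ into $X$, contradicting arithmetic-freeness.

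I expect this last step to be the main obstacle, and it is the reason the bound is stated through the generalized Higman map rather than by an elementary count. The naive hope—that the chain of partial sums $\al_1,\al_1+\al_2,\ldots$ remaining inside the finite set $X$ already bounds the bracket length—fails, since such walks may cycle indefinitely (for example a support like $\{1,-1,2\}$ admits unbounded walks while being arithmetically-free). The genuine content, supplied by \cite{MON}, is that the Jacobi identity prevents these cycles from being realized by \emph{nonzero} brackets: any over-long nonzero Lie monomial can be rewritten so that a forbidden progression is forced into $X$. Verifying that the hypotheses of that theorem are met—arithmetically-free support of size at most $n$, with grading group $(\ov{K},+)$—is the crux, after which $c(\Lg)\le H(n)$ follows at once.
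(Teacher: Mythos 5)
Your proposal is correct and follows essentially the same route as the paper: extend scalars to $\ov{K}$, observe that the generalized eigenspace decomposition of the extended derivation is a grading by $(\ov{K},+)$ supported in $X$, and then invoke Theorem $3.14$ of \cite{MON} as a black box to get $c(\Lg)\le H(\abs{X})\le H(n)$. The only difference is one of exposition—you spell out the Chinese Remainder Theorem splitting and the Leibniz-rule check of $[\Lg_\al,\Lg_\be]\subseteq\Lg_{\al+\be}$, which the paper leaves implicit.
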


\begin{proof}
Let $\Lh=\ov{K}\otimes_K\Lg$ and consider the derivation $M=\id\otimes D$ of $\Lh$. Then $r(M)=0$ in $\ov{K}$ and the
eigenspace decomposition of $\Lh$ with respect to $M$ is a grading by $(\ov{K},+)$, whose support is contained
in $X$. So we can apply Theorem $3.14$ of \cite{MON} to conclude that $\Lh$, and hence $\Lg$ is nilpotent of class
$c(\Lg)\le H(\abs{X})\le H(n)$.
\end{proof}

\begin{rem}\label{3.11}
If $X$ is {\em not} an arithmetically-free subset of $(\ov{K},+)$ in the above theorem, then there exists
some non-nilpotent Lie algebra over $\ov{K}$ of dimension $n+1$ admitting a derivation $D$, which satisfies $r(D)=0$. This follows
immediately from Proposition $3.8$ of \cite{MON}. Moreover we can construct then a filiform nilpotent Lie algebra of
arbitrarily high class admitting a derivation $D$ that satisfies $r(D)=0$, see Corollary $3.9$ in \cite{MON}.
\end{rem}

How can we decide for a given polynomial $r\in K[t]$ whether or not $X$ is arithmetically-free? \\[0.2cm]
For $p=0$ the answer is easy. $X$ is arithmetically-free if and only if $r(0)\neq 0$ in $K$.
So we obtain the following corollary.

\begin{cor}\label{3.12}
Let $K$ be a field of characteristic zero, $r\in K[t]$ be a polynomial of degree $n\ge 0$ such that
$r(0)\neq 0$. Then every Lie algebra $\Lg$ over $K$ admitting a derivation $D$, which satisfies $r(D)=0$,
is nilpotent of class $c(\Lg)\le H(n)$.  
\end{cor}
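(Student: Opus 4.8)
The plan is to deduce this directly from Theorem~\ref{3.10}. Since that theorem already reduces the question of nilpotency to a combinatorial property of the root set, all that remains is to verify that, in characteristic zero, the set $X=\{\al\in\ov{K}\mid r(\al)=0\}$ is an arithmetically-free subset of $(\ov{K},+)$ as soon as $r(0)\neq 0$. Once this is in hand, Theorem~\ref{3.10} yields at once that every Lie algebra $\Lg$ over $K$ admitting a derivation $D$ with $r(D)=0$ is nilpotent of class $c(\Lg)\le H(\abs{X})\le H(n)$.

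The key observation is that $(\ov{K},+)$ is \emph{torsion-free} when $K$ has characteristic zero, because $\ov{K}$ contains a copy of $\Q$. I would combine this with the fact that $X$ is a \emph{finite} set: it consists of roots of the nonzero polynomial $r$ of degree $n$, so $\abs{X}\le n$. Suppose, towards a contradiction, that $X$ is not arithmetically-free, so that there are $\al,\be\in X$ for which the whole progression $\al,\al+\be,\al+2\be,\ldots$ lies in $X$. If $\be\neq 0$, then torsion-freeness makes the elements $\al+k\be$ (for $k\ge 0$) pairwise distinct, producing infinitely many elements of the finite set $X$, a contradiction. Hence $\be=0$; but $\be\in X$ then forces $0\in X$, that is $r(0)=0$, contrary to hypothesis. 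Therefore no such progression exists and $X$ is arithmetically-free, which completes the reduction.

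I expect no serious obstacle here: in sharp contrast to the prime characteristic case, torsion-freeness of $(\ov{K},+)$ collapses the entire problem to the single question of whether $0\in X$. The one point worth recording is precisely this contrast, which also explains the paper's narrative that ``for $p=0$ the answer is easy'': in characteristic $p>0$ a progression $\al,\al+\be,\ldots$ with $\be\neq 0$ runs through only the $p$ values $\al+k\be$ and can therefore fit inside a finite root set, which is exactly what makes that case delicate and motivates the set $\CB_p$. For completeness I would note the degenerate case $n=0$, where $r$ is a nonzero constant, $X=\emptyset$ is vacuously arithmetically-free, and the conclusion holds trivially.
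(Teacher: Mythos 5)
Your proposal is correct and follows essentially the same route as the paper: deduce the corollary from Theorem~\ref{3.10} by showing that, in characteristic zero, $r(0)\neq 0$ forces the finite root set $X$ to be arithmetically-free, using torsion-freeness of $(\ov{K},+)$. In fact your argument is slightly tighter than the paper's, since you explicitly rule out the case $\be=0$ (via $0\notin X$), a point the paper's proof passes over silently when it asserts that the progression $\al,\al+\be,\al+2\be,\ldots$ is infinite.
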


\begin{proof}
Assume that $r(0)\neq 0$ and let $\al,\be \in X$. Since the group  $(\ov{K},+)$ is torsion-free, the arithmetic
progression $\al, \al+1\cdot\be,1+2\cdot \be,\cdots$ contains infinitely many elements, so that it is not
contained in the finite set $X$. Hence $X$ is arithmetically-free. Conversely, let $X$ be arithmetically-free
and assume that $r(0)=0$. Then the arithmetic progression $0,0+1\cdot 0,0+2\cdot 0,\cdots$ is contained in $X$, so that
$X$ is not an arithmetically-free subset of $(\ov{K},+)$. This is a contraction, so that $r(0)\neq 0$.
\end{proof}  

\begin{rem}\label{3.13}
It is interesting to note that the corollary immediately implies Jacobson's result, Proposition $\ref{3.2}$. Indeed,
let $D$ be a nonsingular derivation. Then by Cayley-Hamilton the characteristic polynomial $r=\chi_D$ of $D$ satisfies
$r(D)=0$ with $r(0)\neq 0$ since $\det(D)\neq 0$. Hence the result follows.
\end{rem}

How can we decide for given $r$ whether or not $X$ is arithmetically-free in characteristic $p>0$?
This is much harder to answer for $p>0$ than for $p=0$. We will prove a result for $r=t^n-1\in \F_p[t]$.

\begin{defi}
 Let $r\in K[t]$ be a polynomial. The {\em period of $r$} is the minimal positive integer $m=\per(r)$ such that
 $r$ divides $t^m-1$ in $K[t]$, if such an $m$ exists. For $K=\F_p$ we define sets $\CP_p$ and $\CB_p$ by
 \begin{align*}
\CP_p & =\{\per(h (t^p-t))\mid h\in \F_p[t] \text{ with } h(0)\neq 0, \deg(h)\ge 1\},\\[0.2cm] 
\CB_p & =\N\cdot \CP_p.
\end{align*}
Note that a polynomial $r\in \F_p[t]$ has a period if and only if $r(0)\neq 0$.
\end{defi}  

\begin{ex}\label{3.15}
We have $p^k-1\in \CP_p$ for all primes $p$ and for all $k\ge 2$.
\end{ex}

To see this, let
\[
h=1+t^{p-1}+t^{p^2-1}+t^{p^3-1}+\cdots + t^{p^{k-1}-1}\in \F_p[t].
\]
Using
\[
(t^p-t)^{p^{\ell}-1} = (t(t^{p-1}-1))^{p^{\ell}-1} =t^{p^{\ell}-1}\cdot \frac{t^{p^{\ell}(p-1)}-1}{t^{p-1}-1}   
\]
we obtain
\[
 h(t^p-t) = 1+t^{p-1}+t^{2(p-1)}+t^{3(p-1)}+\cdots + t^{p^k-p} = \frac{t^{p^k-1}-1}{t^{p-1}-1}.
\]  
It is now easy to verify that $\per(h(t^p-t))=p^k-1$, which is contained in $\CP_p$. \\[0.2cm]
By computing the periods of $h(t^p-t)$ for a list of irreducible polynomials $h\in \F_p[t]$ of low degree
we obtain several elements in $\CP_p$.

\begin{ex}\label{3.16}
We have $3,7,31,73,85, 127 \in \CP_2$.
\end{ex}  

This follows from the table below, for $p=2$.
\vspace*{0.5cm}
\begin{center}
\begin{tabular}{c|c}
$h$ & $\per(h(t^2-t))$ \\
\hline
$t+1$ & $3$ \\
$t^3+t+1$ & $7$ \\
$t^4+t^3+t^2+t+1$ & $85$ \\
$t^5+t^2+1$ & $31$  \\
$t^7+t+1$ & $127$ \\
$t^9+t^4+t^2+t+1$ & $73$ \\
\end{tabular}
\end{center}
\vspace*{0.5cm}

We have the following result concerning the root set for $r=t^n-1\in \F_p[t]$. 
Note that this is implicitly stated in \cite{MA2}, section $4$. We state it here in an explicit form including
a proof for the convenience of the reader.

\begin{prop}\label{3.17}
Let $n$ be a positive integer and $p$ be a prime. Then the following assertions are equivalent.

\begin{itemize}
\item[$(1)$] The set $X_{n,p}=\{\al\in \ov{\F}_p \mid \al^n=1 \}$ is an arithmetically-free subset of $(\ov{\F}_p,+)$.
\vspace{0.1cm}  
\item[$(2)$] We have $h_{n,p}=\gcd(t^n-1,(t+1)^n-1,\ldots ,(t+p-1)^n-1)=1$ in $\F_p[t]$.
\vspace{0.1cm}
\item[$(3)$] We have $n\not \in \CB_p$.  
\end{itemize}  
\end{prop}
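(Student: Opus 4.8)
The plan is to establish the two equivalences $(1)\Leftrightarrow(2)$ and $(2)\Leftrightarrow(3)$ separately. For $(1)\Leftrightarrow(2)$, the starting observation is that over a field of characteristic $p$ any arithmetic progression $\al,\al+\be,\al+2\be,\ldots$ is periodic of period $p$ in the index, since $i\be$ depends only on $i \bmod p$. Hence such a progression lies in $X_{n,p}$ precisely when $(\al+i\be)^n=1$ for $i=0,1,\ldots,p-1$. As $\be\in X_{n,p}$ forces $\be\neq 0$ (because $0\notin X_{n,p}$), I would divide through by $\be$ and set $\ga=\al/\be$; using $\be^n=1$ this turns the condition into $(\ga+\la)^n=1$ for every $\la\in\F_p$, and conversely any such $\ga$ yields a progression inside $X_{n,p}$ by taking $\be=1$, $\al=\ga$. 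Thus $X_{n,p}$ fails to be arithmetically-free exactly when some $\ga\in\ov{\F}_p$ is a common root of all the polynomials $(t+\la)^n-1$, $\la\in\F_p$, which is literally the statement that their monic gcd $h_{n,p}$ is nonconstant. This gives $(1)\Leftrightarrow(2)$.

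For $(2)\Leftrightarrow(3)$ the key structural point is that $h_{n,p}$ is a polynomial in $t^p-t$. Indeed, the substitution $t\mapsto t+\mu$ (for $\mu\in\F_p$) merely permutes the family $\{(t+\la)^n-1\}_{\la\in\F_p}$, so it fixes the monic gcd: $h_{n,p}(t+\mu)=h_{n,p}(t)$. I would then invoke the fact that a polynomial over $\F_p$ invariant under all shifts $t\mapsto t+\la$ lies in the subring $\F_p[t^p-t]$, so that $h_{n,p}=H(t^p-t)$ for some $H\in\F_p[t]$. Since $h_{n,p}\mid t^n-1$ and $t^n-1$ does not vanish at $0$, we get $H(0)=h_{n,p}(0)\neq 0$; and if $h_{n,p}\neq 1$ then $\deg H\geq 1$. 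Now the divisibility $H(t^p-t)=h_{n,p}\mid t^n-1$ together with the standard equivalence $g\mid t^n-1\Leftrightarrow \per(g)\mid n$ (valid when $g(0)\neq0$) shows $\per(H(t^p-t))\mid n$, whence $n\in\N\cdot\CP_p=\CB_p$. This proves $(3)\Rightarrow(2)$ in contrapositive form.

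For the reverse implication $\neg(3)\Rightarrow\neg(2)$, suppose $n\in\CB_p$, so there is $h\in\F_p[t]$ with $h(0)\neq0$, $\deg h\geq1$ and $\per(h(t^p-t))\mid n$, i.e.\ $h(t^p-t)\mid t^n-1$. Pick any root $\ga\in\ov{\F}_p$ of $h(t^p-t)$, which exists since this polynomial has degree $p\deg h\geq p$. Because $t^p-t$ is itself invariant under $t\mapsto t+\la$, every translate $\ga+\la$ ($\la\in\F_p$) is again a root of $h(t^p-t)$, hence of $t^n-1$; thus $(\ga+\la)^n=1$ for all $\la\in\F_p$ and $\ga$ is a common root of the $(t+\la)^n-1$. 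Therefore $h_{n,p}(\ga)=0$ and $h_{n,p}\neq 1$, giving $\neg(2)$.

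The main obstacle is the structural lemma that a shift-invariant polynomial over $\F_p$ is a polynomial in $t^p-t$; everything else is bookkeeping. I expect to prove it by regarding $\F_p(t)$ as the Artin--Schreier extension of $\F_p(s)$ with $s=t^p-t$, which is Galois of degree $p$ with group generated by $t\mapsto t+1$, so that the shift-invariant elements of $\F_p(t)$ are exactly $\F_p(s)$; since $\F_p[t]$ is integral over the integrally closed ring $\F_p[s]$, a shift-invariant polynomial lies in $\F_p(s)\cap\F_p[t]=\F_p[s]$. Alternatively one can argue by induction on the degree, subtracting a suitable $\F_p$-multiple of a power of $t^p-t$ to reduce the degree while preserving shift-invariance. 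The secondary technical ingredient is the equivalence $g\mid t^n-1\Leftrightarrow\per(g)\mid n$, which follows from $\gcd(t^a-1,t^b-1)=t^{\gcd(a,b)}-1$ and the minimality in the definition of the period.
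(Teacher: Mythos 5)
Your proof is correct, and it follows the same skeleton as the paper's: $(1)\Leftrightarrow(2)$ by reducing an arithmetic progression mod $p$ and passing to the ratio $\ga=\al/\be$, then $(2)\Leftrightarrow(3)$ by showing $h_{n,p}$ lies in $\F_p[t^p-t]$. The genuine difference is in how shift-invariance of $h_{n,p}$ is obtained. The paper builds an iterated gcd tower $H_0=t^n-1$, $H_i=\gcd(H_{i-1}(t),H_{i-1}(t+1))$, proves $h_{n,p}=\gcd(H_i(t),\ldots,H_i(t+p-1))$ for all $i$, and extracts $H_\ell(t)=H_\ell(t+1)$ from degree stabilization; you get the same conclusion in one line by noting that $t\mapsto t+\mu$ is an automorphism of $\F_p[t]$ that permutes the defining family $\{(t+\la)^n-1\}_{\la\in\F_p}$, hence fixes the monic gcd. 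Furthermore, the passage from shift-invariance to $h_{n,p}=H(t^p-t)$ is asserted without proof in the paper, whereas you supply one: the Artin--Schreier argument (the fixed field of $t\mapsto t+1$ acting on $\F_p(t)$ is $\F_p(t^p-t)$, and $\F_p[t^p-t]$ is integrally closed in it) is sound, and your inductive alternative also works once one notes that shift-invariance forces $p\mid\deg$, so the leading term can be cancelled by an $\F_p$-multiple of a power of $t^p-t$. Your contrapositive of $(2)\Rightarrow(3)$ --- translating a root of $h(t^p-t)$ by elements of $\F_p$ --- is the root-level counterpart of the paper's divisibility argument via Fermat's little theorem, i.e.\ $h(t^p-t)=h((t+\ell)^p-(t+\ell))$; both are equally valid. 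In sum, your argument is tighter where the paper is longest and more complete where the paper is terse.
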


\begin{proof}
$(2)\Longrightarrow (1):$ Suppose that $X_{n,p}$ is not arithmetically-free. Then there exist $\al,\be\in X_{n,p}$
such that $\al^n=(\al+\be)^n=\cdots =(\al+(p-1)\be)^n=1$. For $\ga:=\frac{\al}{\be}$ we have $\ga^n=1$ and
$\ga\in X_{n,p}$. But then also $\ga^n=(\ga+1)^n=\cdots =(\al+(p-1))^n=1$, so that $\ga$ is a common root of
$t^n-1,(t+1)^n-1,\ldots ,(t+(p-1))^n-1$ and therefore of $h_{n,p}$. So $h_{n,p}\neq 1$, a contradiction. It follows
that $(1)$ holds. \\[0.2cm]
$(1)\Longrightarrow (2):$ Suppose that $h_{n,p}\neq 1$. Let $\ga$ be a root of it and let $\al=\be=1$. Then 
$\al,\al+\be,\ldots ,\al+(p-1)\be\in X_{n,p}$. Hence $X_{n,p}$ is not arithmetically-free. \\[0.2cm]
$(2)\Longrightarrow (3):$ Suppose that $n\in \CB_p$. Then we can choose an $h\in \F_p[t]$ with
$h(0)\neq 0$ and $\deg(h)\ge 1$ such that $m=\per (h(t^p-t))$ divides $n$. Hence $h(t^p-t)$ divides $t^n-1$.
Using Fermat's little theorem we see that $h(t^p-t)=h((t+\ell)^p-(t+\ell))$, so that $h(t^p-t)$ divides
$t^n-1,(t+1)^n-1,\ldots ,t+p-1)^n-1$ and therefore also $h_{n,p}$. it follows that $h_{n,p}\neq 1$. \\[0.2cm]
$(3)\Longrightarrow (2):$ Suppose that $h_{n,p}\neq 1$. Define polynomials $H_i\in \F_p[t]$ by $H_0=t^n-1$ and
\[
H_i=\gcd(H_{i-1}(t),H_{i-1}(t+1))
\]
for $i\ge 1$. Then for all $k\in \F_p$ we have $H_i(t+k)=\gcd(H_{i-1}(t+k),H_{i-1}(t+k+1))$. Hence we have for all
$i\ge 0$ that
\begin{align*}
h_{n,p} & = \gcd(H_i(t),H_i(t+1),\ldots,H_i(t+p-1)) \\
       & =  \gcd(H_{i+1}(t),H_{i+1}(t+1),\ldots,H_{i+1}(t+p-1)).
\end{align*}
Furthermore, $\deg(H_i)\ge \deg(H_{i+1})$ for all $i\ge 0$ so that there exists an $\ell\ge 1$ such that
$\deg(H_{\ell})=\deg(H_{\ell+1})$. Since $H_{\ell}(t)$ and $H_{\ell}(t+1)$ are monic polynomials of the same degree as their
greatest common divisor, we conclude that $H_{\ell}(t)=H_{\ell+1}(t)=H_{\ell}(t+1)$ and therefore
$H_{\ell}(t)=H_{\ell}(t+1)=\cdots =H_{\ell}(t+p-1)$, so that $h_{n,p}(t)=h_{n,p}(t+1)=\cdots =h_{n,p}(t+p-1)$.
Thus $h_{n,p}$ is of the form $h(t^p-t)$ for some $h\in \F_p[t]$. Since $h_{n,p}$ divides $t^n-1$, we have $h_{n,p}(0)\neq 0$
and therefore $h(0)\neq 0$. We have assumed that $h_{n,p}\neq 1$, so that $h$ is non-constant. So $\per(h(t^p-t))$ divides
$n$ and $n\in \CB_p$.
\end{proof}  

Theorem $\ref{3.10}$, Remark $3.11$ and Proposition $\ref{3.17}$ together yield the following result.
Note that if $\Lg$ admits a periodic derivation of order dividing $n$ then $\Lg\oplus K^n$ admits a periodic
derivation of order exactly $n$.

\begin{prop}\label{3.18}
Let $n\in \N$ and $p$ be a prime number. If $n\not\in \CB_p$, then every Lie algebra $\Lg$ over
a field of characteristic $p>0$ admitting a periodic derivation of order $n$ is nilpotent of class
$c(\Lg)\le H(n)$. If  $n\in \CB_p$ then there exists some non-nilpotent Lie algebra in characteristic
$p>0$ admitting a periodic derivation of order $n$.
\end{prop}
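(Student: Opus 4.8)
The plan is to assemble the statement from the three results quoted just before it, handling the two implications in turn. Throughout I put $r=t^n-1\in K[t]$, a polynomial of degree $n$, and observe that a derivation $D$ is periodic of order dividing $n$ precisely when $D^n=\id$, that is, when $r(D)=0$; furthermore the root set of $r$ in $\ov{K}$ is exactly $X_{n,p}=\{\al\in\ov{\F}_p\mid \al^n=1\}$, since every $n$-th root of unity already lies in the algebraic closure of the prime field.

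For the first assertion assume $n\notin\CB_p$. By the equivalence $(3)\Leftrightarrow(1)$ of Proposition \ref{3.17}, the set $X_{n,p}$ is an arithmetically-free subset of $(\ov{\F}_p,+)$, and hence of $(\ov{K},+)$. A periodic derivation $D$ of order $n$ satisfies $r(D)=0$, so Theorem \ref{3.10} applies verbatim to the arithmetically-free root set $X=X_{n,p}$ and yields $c(\Lg)\le H(\deg r)=H(n)$. No further argument is needed for this half.

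For the second assertion suppose $n\in\CB_p$. By the same equivalence $(3)\Leftrightarrow(1)$ of Proposition \ref{3.17}, the set $X_{n,p}$ is \emph{not} arithmetically-free, so Remark \ref{3.11} supplies a non-nilpotent Lie algebra $\Lg$ over $\ov{K}$ (of dimension $n+1$) carrying a derivation $D$ with $r(D)=0$, i.e.\ with $D^n=\id$. Such a $D$ is only guaranteed to be periodic of order \emph{dividing} $n$, so the remaining work — and the one point that needs care — is to manufacture from it a periodic derivation of order \emph{exactly} $n$ without destroying non-nilpotency. Here I would invoke the device in the note. Let $A$ be the $n$-dimensional abelian Lie algebra over $\ov{K}$; every linear endomorphism of $A$ is a derivation, so I take $E$ to be the cyclic permutation of a fixed basis $e_1,\dots,e_n$. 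As an $n$-cycle, $E$ satisfies $E^n=\id$ and $E^k\neq\id$ for $0<k<n$ over any field, hence is periodic of order exactly $n$. Passing to the Lie-algebra direct sum $\Lg\oplus A$ (with $A$ an abelian ideal and no cross-brackets) and the derivation $D\oplus E$, the Leibniz rule holds componentwise, and the order of $D\oplus E$ equals $\operatorname{lcm}(\operatorname{ord}(D),n)=n$ because $\operatorname{ord}(D)\mid n$. Since $\Lg$ is a non-nilpotent direct summand, $\Lg\oplus A$ is again non-nilpotent, which furnishes the desired example and completes the proof.
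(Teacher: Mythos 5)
Your proof is correct and follows exactly the paper's argument: Proposition \ref{3.17} translates $n\notin\CB_p$ (resp.\ $n\in\CB_p$) into arithmetic-freeness (resp.\ non-freeness) of $X_{n,p}$, Theorem \ref{3.10} and Remark \ref{3.11} then give the two conclusions for $r=t^n-1$, and the order-dividing-$n$ issue is repaired exactly as the paper indicates, by passing to $\Lg\oplus K^n$ with a derivation of order exactly $n$ on the abelian summand (your cyclic permutation makes explicit what the paper leaves implicit). No gaps; this is the intended proof.
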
  

\begin{rem}
The proposition shows that our set $\CB_p$ coincides with the set $\CN_p$ introduced by Shalev \cite{SHA} and
studied further by Mattarei \cite{MA1,MA2,MA3}. Shalev asked in his Problem $1$ in \cite{SHA} for the possible
orders $n$ of nonsingular derivations of non-nilpotent Lie algebras in characteristic $p>0$. Mattarei denoted
by $\CN_p$ the set of such positive integers $n$ and showed in \cite{MA2}, Theorem $2.1$ that $\CN_p$ can be described
in purely arithmetic terms, namely by 
\[
\CN_p=\{ n\in \N\mid \text{ there exists an element $\al\in \ov{\F}_p$ such that $(\al+\la)^n=1$ for all $\la\in \F_p$}\}.
\]  
Shalev already had shown that the $n$ arising as orders of periodic derivations of non-nilpotent Lie algebras in
characteristic $p>0$ belong to this set. Mattarei showed also the converse. \\[0.2cm]
Proposition $\ref{3.18}$ allows us to obtain examples of $n$ lying in $\CB_p$.
Any $h\in \F_p[t]$ with $h(0)\neq 0$ and $\deg(h)\ge 1$ will produce an element of $\CB_p=\CN_p$ by computing the
period of $h(t^p-t)$. For example, we can show that $3,7,31,73,85,127,\ldots, $ are elements
of $\CN_2$, see Example $\ref{3.16}$. These calculations have also been done in section $3.3$ of \cite{MA3}, where
an efficicient algorithm is used to test whether or not a given $n\in \N$ belongs to $\CB_p$.
\end{rem}

\begin{ex}
Fix a positive integer $n\le 12$. For such small $n$ we can decide for which primes $p$ we have $n\in \CB_p$.
In fact, $n\in \CB_p\Longrightarrow  p\mid \rho_n$ by Theorem $\ref{3.16}$, so that we only 
need to consider the prime
divisors $p$ of $\rho_n$ from the tables in section $2$. Using further results from \cite{SHA} we see that
\[
n\in \CB_p \Longleftrightarrow (n,p)\in \{ (3,2),(6,2),(7,2),(8,3),(9,2),(12,2)\}.
\]
\end{ex}
Hence we know, for example, that every modular Lie algebra of any prime characteristic $p>0$
admitting a periodic derivation of order $2,4,5,10,11$ is nilpotent. The ``bad'' primes for orders $n=3,6,7,8,9,12$
are $p=2,3$, where the Lie algebra over characteristic $p>0$ admitting a derivation of order $n$ need not be
nilpotent. \\[0.2cm]
For $p=2$ we can even describe the set $\CB_2$ totally in terms of $\rho_n$.

\begin{ex}
We have $n\in \CB_2 \Longleftrightarrow 2\mid \rho_n$. In fact, much more is true. The simple Lie algebra $W(1;2)$ of
dimension $3$ in characteristic $2$ admits a derivation $D$ with $D^n=\id$ for all $n$ with $2\mid \rho_n$.
\end{ex}

Let $(x_1,x_2,x_3)$ be a basis of $W(1;2)$ with $[x_1,x_2]=x_3$, $[x_1,x_3]=x_2$ and $[x_2,x_3]=x_1$.
Assume that $2\mid \rho_n$. Then there exists a $\la\in \ov{\F}_2$ such that $\la^n=(1+\la)^n=1$.
Define $D=\diag(1,\la,1+\la)$. Then $D$ is a derivation of $W(1;2)$ satisfying $D^n=\id$. Since
$W(1;2)$ is not nilpotent, it follows that $n\in \CB_2$. Conversely, $n\in \CB_2$ implies $2\mid \rho_n$
as above.

\begin{rem}
It would be also interesting to ask about the possible orders $n$ of nonsingular derivations of
{\em non-solvable} Lie algebras in characteristic $p>0$. The set of such positive integers $n$ would be contained
in the set $\CB_p$ and potentially be a proper subset.
\end{rem}

\section{Lie rings with a derivation satisfying a polynomial identity}

We can generalize Theorem $\ref{3.10}$ to Lie rings over $\Z$ by slightly modifying the structure theorems for Lie rings
proved in \cite{MON}. In addition to $\rho_n$ we also need the invariants $\delta(r)$ and $\sigma(r)$, introduced in
section $2$.

\begin{defi}
Let $L$ be a Lie ring over $\Z$ and $m \in \Z$. We say that $L$ has {\em no $m$-torsion} if the set of $v\in L$
such that there is a $k\in \N$ with $m^k\cdot v=0$ is equal to $\{0\}$. 
\end{defi}

First we obtain a direct analogue of Theorem $5.1.2$ in \cite{MON} as follows.

\begin{thm}\label{4.2}
Let $L$ be a Lie ring $L$ over $\Z$, $D$ be a derivation of $L$ and $r \in \Z[t]$ be a polynomial such that $r(D) = 0$.
Suppose that $L$ has no $\delta(r) \sigma(r)$-torsion. Then the Lie ring $\delta \cdot L$ can be embedded into a
Lie ring $M$ that is graded by $(\overline{\Q},+)$ and supported by the roots of the polynomial.
\end{thm}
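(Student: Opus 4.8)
The plan is to mimic the proof of the characteristic-zero case (Theorem \ref{3.10}), but to replace the extension of scalars $\ov{K}\otimes_K\Lg$ — which is unavailable for a $\Z$-module with torsion — by a more delicate embedding argument that is robust under localization. The essential point is that over $\Z$ one cannot diagonalize $D$ directly: the eigenspace decomposition that produced the grading in the field case must now be engineered inside a larger Lie ring $M$, after clearing denominators and killing the relevant torsion. I expect the proof to follow the scheme of Theorem $5.1.2$ of \cite{MON}, with the arithmetic invariants $\delta(r)$ and $\si(r)$ from section $2$ playing exactly the roles that the discriminant and the products of differences of eigenvalue-sums play there.

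\smallskip

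First I would pass to $\ov{\Q}$ conceptually: the roots $\la_1,\dots,\la_\ell$ of $r$ live in $\ov{\Q}$, and the grading we are after is indexed by the additive group $(\ov{\Q},+)$ with support the root set of $r$. The natural candidate for the $\la$-component is the generalized eigenspace, i.e. the image of a projection built from the Lagrange-type interpolation idempotents associated to the distinct roots. The obstruction to writing these idempotents over $\Z$ is twofold: one must divide by differences $\la_i-\la_j$ of distinct roots, and (because of higher multiplicities) by factorials up to $(m-1)!$ where $m=\max_k m_k$. This is precisely why the invariant $\delta(r)=a^{1+2d^2}(m-1)!\prod_{i\ne j}(\la_i-\la_j)^m$ appears: after multiplying $L$ by $\delta$ and inverting $\delta$ on the torsion-free module $\delta\cdot L$, the interpolation operators become well defined and integral, so that $\delta\cdot L$ decomposes as a direct sum of $D$-generalized eigen-submodules. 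The hypothesis that $L$ has no $\delta(r)\si(r)$-torsion guarantees that this multiplication-by-$\delta$ map is injective, so $\delta\cdot L\cong L$ as Lie rings and no information is lost.

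\smallskip

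Next I would check that the resulting decomposition is in fact a grading by $(\ov{\Q},+)$: the bracket of a $\la_i$-component with a $\la_j$-component must land in the $(\la_i+\la_j)$-component. When $\la_i+\la_j$ is again a root of $r$ this is automatic from the derivation property; the delicate case is when $\la_i+\la_j$ is \emph{not} a root, where one must show the corresponding bracket vanishes (so the support stays inside $X$). Controlling these ``forbidden'' bracket components is exactly where $\si(r)=a^{2d^3}\prod_{r(\la_i+\la_j)\ne 0}r(\la_i+\la_j)$ enters, since $r(\la_i+\la_j)$ is the obstruction one evaluates the identity $r(D)=0$ against on $[x_i,x_j]$, and inverting it forces the offending component to be zero. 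The no-$\si(r)$-torsion hypothesis makes these inversions legitimate over the localized ring. Assembling the pieces gives the embedding of $\delta\cdot L$ into a Lie ring $M$ graded by $(\ov{\Q},+)$ and supported on the roots of $r$.

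\smallskip

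The main obstacle, I expect, is purely bookkeeping of denominators: one must verify that the \emph{single} integer $\delta(r)\si(r)$ simultaneously clears every denominator arising both in constructing the projections (the $\delta$ part) and in forcing the grading compatibility (the $\si$ part), and that this can be done uniformly without extracting roots of $r$. This is exactly the content that was packaged into the statements, recalled in section $2$, that $\delta(r)$ and $\si(r)$ are genuine integers expressible as determinants of Sylvester matrices (compare \cite{MOE}, Lemmas $4.3.1$ and $4.3.3$). Granting those integrality facts, the argument is a faithful transcription of the field-theoretic grading argument of \cite{MON}, carried out over the localization $\Z[\tfrac{1}{\delta(r)\si(r)}]$ and then descended back to $\delta\cdot L$ using the torsion hypothesis.
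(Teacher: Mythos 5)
Your overall route is the same as the paper's: both proofs are transcriptions of Theorem $5.1.2$ of the cited work of Moens into additive notation, with $\delta(r)$ clearing the denominators of the interpolation projections, $\sigma(r)$ responsible for the ``forbidden'' bracket components $[E_{\lambda_i},E_{\lambda_j}]$ with $\lambda_i+\lambda_j$ not a root (since $r(\widetilde{D})=0$ forces such a component to be $r(\lambda_i+\lambda_j)$-torsion, hence $\sigma$-torsion), and the no-$\delta(r)\sigma(r)$-torsion hypothesis guaranteeing that the final map out of $\delta\cdot L$ is injective.

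There is, however, one step in your sketch that fails as literally stated and that the paper's proof repairs by an explicit construction: you claim that after multiplying by $\delta$ ``the interpolation operators become well defined and integral, so that $\delta\cdot L$ decomposes as a direct sum of $D$-generalized eigen-submodules.'' This cannot happen over $\Z$, nor over the localization $\Z[\tfrac{1}{\delta(r)\sigma(r)}]$: the interpolation idempotents have coefficients involving the roots $\lambda_1,\ldots,\lambda_k$ themselves, which are algebraic numbers lying outside the base ring (for $r=t^2+1$, for instance, no $\Z$-module with an operator $D$ satisfying $D^2=-\id$ splits into $\pm i$-eigenspaces). The paper therefore first forms the larger Lie ring $\widetilde{L}:=R\otimes_{\Z}L$ with $R=\Z[\lambda_1,\ldots,\lambda_k]$, defines the generalized eigenspaces $E_{\lambda_i}$ and $N=\sum_i E_{\lambda_i}$ inside $\widetilde{L}$, passes to the quotient of $N$ by the $\delta\cdot\sigma$-torsion ideal $T$ (your ``inverting $\sigma$'' is the localization analogue of this quotient), and only then embeds $\delta\cdot L$ into the resulting graded ring $M$ via $\delta\cdot v\mapsto \delta\otimes v$ followed by the projection $P$. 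So the graded decomposition lives in $M$ and not in $\delta\cdot L$ itself; $\delta\cdot L$ is merely embedded into it, which is exactly what the statement of the theorem asks for. With that correction --- decompose the scalar extension, not $\delta\cdot L$ --- your argument coincides with the paper's proof.
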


\begin{proof}
The proof is identical to the proof of Theorem $5.1.2$ if we replace the invariant $\pi$ by $\sigma$ and pass to
the additive notion, i.e., replacing endomorphism by derivation, the semi-group $(\ov{\Q},\cdot)$ by $(\ov{\Q},+)$,
the product $\la\cdot \mu$ by the sum $\la+\mu$, and so on. We write $\de,\sigma$ for $\delta(r),\sigma(r)$.
The main idea of the proof goes as follows.
We first construct a larger Lie ring $\widetilde{L} := R \otimes_{\Z} L$ with coefficients in
$R := \Z[\lambda_1,\ldots,\lambda_k]$, where $\lambda_1,\ldots,\lambda_k$ are the roots of $r$ in $\overline{\Q}$.
This Lie ring admits a natural derivation $\map{\widetilde{D}}{\widetilde{L}}{\widetilde{L}}$
satisfying $r(\widetilde{D}) = 0$. We can then define generalized eigenspaces
$E_{\lambda_1} , \ldots , E_{\lambda_k}$ of $\widetilde{L}$ with respect to $\widetilde{D}$. They satisfy the obvious
grading property $[E_{\lambda_i},E_{\lambda_j}] \subseteq E_{\lambda_i + \lambda_j}$. We then define the $R$-module
$N := \sum_{1 \leq i \leq k} E_{\lambda_i}$ of $\widetilde{L}$. It is easy to verify that $N$ is a Lie ring over $R$.
Let $T$ be the $\delta \cdot \sigma$-torsion ideal of $\widetilde{L}$. We then consider the natural
projection $\map{P}{\widetilde{L}}{\widetilde{L}/T}$. Each of the eigenspaces $E_{\lambda_i}$ contains $T$.
By defining $M_{\lambda_i} := P(M_{\lambda_i})$, we obtain
\[
M := P(N) = \sum_{1 \leq i \leq k} P({N}_{\lambda_i}) = \sum_{1 \leq i \leq k} M_{\lambda_i}.
\]
We can then verify that this sum is direct. So
\[
M = \bigoplus_{1 \leq i \leq k} M_{\lambda_i}
\]  
is in fact a grading of $M$ by $(\overline{\Q},+)$. Since $L$ is assumed to have no
$\delta \cdot \sigma $-torsion, the composition of $\iota \colon \de\cdot L\ra N \subseteq \widetilde{L}$, given by
$\delta \cdot v \mapsto \delta \otimes v$, and $\map{P}{\widetilde{L}}{M}$ is the required
embedding of Lie rings over $\Z$.
\end{proof}

Now we can formulate an analogue of Theorem $1.2.6$ of \cite{MON} as follows.

\begin{thm}
Let $L$ be a Lie ring over $\Z$, $D$ be a derivation of $L$ and $r\in \Z[t]$ be a polynomial of degree $n$
such that $r(D)=0$. If $r(0)\neq 0$ and if $L$ has no $\delta(r)\sigma(r)$-torsion, then $L$ is
nilpotent of class $c(L)\le H(n)$.
\end{thm}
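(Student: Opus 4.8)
The plan is to combine the embedding Theorem~\ref{4.2} with the torsion-free argument behind Corollary~\ref{3.12} and a final descent from $\de\cdot L$ to $L$, where I abbreviate $\de:=\delta(r)$ and $\si:=\sigma(r)$. First, since $L$ has no $\de\si$-torsion, Theorem~\ref{4.2} applies and yields an embedding of the Lie ring $\de\cdot L$ into a Lie ring $M$ that is graded by $(\ov{\Q},+)$ and supported on the set $X=\{\la_1,\ldots,\la_\ell\}$ of roots of $r$ in $\ov{\Q}$, with $\ell\le n$.

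Next I would check that $X$ is arithmetically-free. Because $r(0)\neq 0$ we have $0\notin X$. Arguing exactly as in the proof of Corollary~\ref{3.12}, the group $(\ov{\Q},+)$ is torsion-free, so for $\al,\be\in X$ with $\be\neq 0$ the progression $\al,\al+\be,\al+2\be,\ldots$ consists of infinitely many distinct elements and cannot lie in the finite set $X$; the case $\be=0$ is ruled out since $0\notin X$. Hence $X$ is an arithmetically-free subset of $(\ov{\Q},+)$. The grading of $M$ therefore has arithmetically-free support of cardinality $\ell\le n$, and by the Lie-ring version of the structure theorem of \cite{MON} (Theorem~$3.14$, the same result used in the proof of Theorem~\ref{3.10}), $M$ is nilpotent of class at most $H(\ell)$. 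Since $\de\cdot L$ embeds into $M$ as a sub-Lie-ring, it follows that $\de\cdot L$ is nilpotent of class $c(\de\cdot L)\le H(\ell)\le H(n)$.

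It remains to pass from $\de\cdot L$ to $L$. Writing the lower central series as $L^{(1)}=L$ and $L^{(k+1)}=[L,L^{(k)}]$, a straightforward induction using $[\de u,\de^k w]=\de^{k+1}[u,w]$ gives $(\de\cdot L)^{(k)}=\de^{k}\cdot L^{(k)}$ for all $k\ge 1$. Setting $c:=H(n)$, we obtain $\de^{c+1}\cdot L^{(c+1)}=(\de\cdot L)^{(c+1)}=0$. Thus every element of $L^{(c+1)}$ is $\de$-torsion, hence $\de\si$-torsion, and the torsion hypothesis forces $L^{(c+1)}=0$. Therefore $L$ is nilpotent of class $c(L)\le H(n)$.

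The step I expect to be the crux is the invocation of the graded-nilpotency bound $H(\ell)$ over $\Z$: the Higman-type estimate is proved in \cite{MON} for Lie algebras over a field, so one must confirm that it carries over to the graded Lie ring $M$. This is exactly what the Lie-ring reformulation in Theorem~\ref{4.2} is designed to make available, the point being that the argument uses only the additive combinatorics of the support together with the grading relation $[M_{\la_i},M_{\la_j}]\subseteq M_{\la_i+\la_j}$, which is valid over any commutative base ring.
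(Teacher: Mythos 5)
Your proposal is correct and follows essentially the same route as the paper: embed $\de\cdot L$ into the graded Lie ring $M$ via Theorem~\ref{4.2}, observe that $r(0)\neq 0$ makes the root set arithmetically-free in the torsion-free group $(\ov{\Q},+)$, apply the structure theorem (Theorem $3.14$ of \cite{MON}) to bound $c(M)\le H(n)$, and descend to $L$ using the torsion hypothesis. The only difference is one of detail: your explicit lower-central-series identity $(\de\cdot L)^{(k)}=\de^{k}\cdot L^{(k)}$ is a worked-out justification of the step the paper states in one line as $c(L)=c(\de\cdot L)$.
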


\begin{proof}
Since $L$ has no $\de$-torsion we obtain that $c(L)=c(\de\cdot L)$. By Theorem $\ref{4.2}$ we can embed the Lie ring
$\de\cdot L$ into a Lie ring $M$ that is graded by $(\ov{\Q},+)$
and supported by the root set $X$ of $r$. Since $r(0)\neq 0$ this support $X$ is arithmetically-free. According to  
Theorem $3.14$ of \cite{MOE} $M$ is nilpotent of class $c(M)\le H(n)$. Hence $\de\cdot L$ is nilpotent of class
at most $H(n)$, too. 
\end{proof}

\section*{Acknowledgments}
Dietrich Burde is supported by the Austrian Science Foun\-da\-tion FWF, grant I3248 and P33811. 
Wolfgang A. Moens acknowledges support by the Austrian Science Foun\-da\-tion FWF, grant P30842.


\begin{thebibliography}{99}

  
\bibitem{APO} T. Apostol: {\em Resultants of cyclotomic polynomials}.
Proc.\ Amer.\ Math.\ Soc.\ \textbf{24} (1970), 457--462.  

\bibitem{BKK} G. Benkart, A. I. Kostrikin, and M. I. Kuznetsov: {\em Finite-dimensional simple Lie algebras with a 
nonsingular derivation}. J.\ Algebra \textbf{171} (1995), no. 3, 894--916.

\bibitem{BU45} D. Burde, W. A. Moens: {\it Periodic derivations and prederivations of Lie algebras}.
Journal of Algebra, Vol. \textbf{357} (2012), 208--221.

\bibitem{JAC} N. Jacobson: {\it A note on automorphisms and derivations of Lie algebras}.
Proc.\ Amer.\ Math.\ Soc.\ \textbf{6} (1955), 281--283.

\bibitem{KOK} A. I. Kostrikin, M. I. Kuznetsov: {\em Lie algebras with a nonsingular derivation}.
Algebra and analysis (Kazan, 1994) (de Gruyter, Berlin, 1996) pp. 81--90.

\bibitem{KUK} A. I. Kostrikin, M. I. Kuznetsov: {\em Two remarks on Lie algebras with a
nonsingular derivation}.
Proceedings of the Steklov Institute of Mathematics  \textbf{208} (1995), 166-–171.

\bibitem{LEH} E. Lehmer: {\em On a resultant connected with Fermat's last theorem}.
Bull.\ Amer.\ Math.\ Soc.\ \textbf{41} (1935), 864--867.

\bibitem{MA1} S. Mattarei: {\em The orders of nonsingular derivations of modular Lie algebras}.
Israel J.\ Math.\ \textbf{132} (2002), 265--275.

\bibitem{MA2} S. Mattarei: {\em The orders of nonsingular derivations of Lie algebras of characteristic two}.
Israel J.\ Math.\ \textbf{160} (2007), 23--40.

\bibitem{MA3} S. Mattarei: {\em A sufficient condition for a number to be the order of a nonsingular derivation
of a Lie algebra}.
Israel J.\ Math.\ \textbf{171} (2009), 1--14.

\bibitem{MON} W. A. Moens: {\em Arithmetically-free group-gradings of Lie algebras: II}.
Journal of Algebra, Vol. \textbf{492} (2017), 457--474.


\bibitem{MOE} W. A. Moens: {\em The nilpotency of finite groups with a fixed-point-free automorphism 
satisfying an identity}. arXiv:1810.04965v4 (2020).

\bibitem{SHA} A. Shalev: {\em The orders of nonsingular derivations}.
J.\ Austral.\ Math.\ Soc.\ Ser.\ A \textbf{67} (1999), no. 2, 254--260. 

\bibitem{WEN} E. Wendt: {\em Arithmetische Studien \"uber den ``letzten'' Fermatschen Satz, welcher aussagt, dass die Gleichung
$a^n+b^n=c^n$ f\"ur $n>2$ in ganzen Zahlen nicht aufl\"osbar ist}. (German)
J.\ Reine Angew.\ Math.\ \textbf{113} (1894), 335--347. 

\end{thebibliography}
\end{document}